\newcommand{\scal}[2]{{\left\langle{{#1}\mid{#2}}\right\rangle}}
\newcommand{\conv}[1]{\ensuremath{\text{\rm conv}}(#1)}
\newcommand{\maximize}[2]{\ensuremath{\underset{\substack{{#1}}}%
{\text{\rm maximize}}\;\;#2 }}
\newcommand{\Frac}[2]{\displaystyle{\frac{#1}{#2}}} 
\newcommand{\RX}{\ensuremath{\left]-\infty,+\infty\right]}}
\renewcommand{\leq}{\ensuremath{\leqslant}}
\renewcommand{\geq}{\ensuremath{\geqslant}}
\renewcommand{\le}{\ensuremath{\leqslant}}
\newcommand{\inte}{\ensuremath{\text{\rm int}}}
\newcommand{\card}{\ensuremath{\text{\rm card}\,}}
\newcommand{\dom}{\ensuremath{\text{\rm dom}\,}}
\newcommand{\XX}{\ensuremath{\mathcal{X}}}
\newcommand{\menge}[2]{\big\{{#1}~\big |~{#2}\big\}} 
\newcommand{\Mmenge}[2]{\bigg\{{#1}~\Big |~{#2}\bigg\}} 
\newcommand{\Menge}[2]{\Big\{{#1}~\big |~{#2}\Big\}} 
\newtheorem{theorem}{Theorem}[section]
\newtheorem{lemma}[theorem]{Lemma}
\newtheorem{corollary}[theorem]{Corollary}
\theoremstyle{plain}{\theorembodyfont{\rmfamily}%
\newtheorem{conjecture}[theorem]{Conjecture}}
\theoremstyle{plain}{\theorembodyfont{\rmfamily}%
\newtheorem{assumption}[theorem]{Assumption}}
\theoremstyle{plain}{\theorembodyfont{\rmfamily}%
}
\theoremstyle{plain}{\theorembodyfont{\rmfamily}%
\newtheorem{example}[theorem]{Example}}
\theoremstyle{plain}{\theorembodyfont{\rmfamily}%
\newtheorem{remark}[theorem]{Remark}}
\theoremstyle{plain}{\theorembodyfont{\rmfamily}%
\newtheorem{definition}[theorem]{Definition}}
\theoremstyle{plain}{\theorembodyfont{\rmfamily}%
}
\numberwithin{equation}{section}
\title{\sffamily Kolmogorov $n$-Widths of Function Classes 
Induced by a\\ Non-Degenerate Differential 
Operator: A Convex Duality Approach\footnote{Contact author: 
P. L. Combettes, {\ttfamily{plc@ljll.math.upmc.fr}},
phone: +33 1 4427 6319, fax: +33 1 4427 7200.}}
\author{Patrick L. Combettes$^1$ and Dinh D\~ung$^2$\\[5mm]
\small $\!^1$Sorbonne Universit\'es -- UPMC Univ. Paris 06\\
\small UMR 7598, Laboratoire Jacques-Louis Lions\\
\small F-75005 Paris, France\\
\small \ttfamily{plc@ljll.math.upmc.fr}\\[4mm]
\small $\!^2$Information Technology Institute\\
\small Vietnam National University\\
\small Hanoi, Vietnam\\ 
\small {\ttfamily{dinhzung@gmail.com}}\\[4mm]
}
\date{~}
\newcommand{\pinf}{\ensuremath{{+\infty}}}
\newcommand{\RP}{\ensuremath{{\mathbb R}_+}}
\newcommand{\RR}{\ensuremath{{\mathbb R}}}
\newcommand{\TT}{\ensuremath{{\mathbb T}}}
\newcommand{\CC}{\ensuremath{{\mathbb C}}}
\newcommand{\RPP}{\ensuremath{{\mathbb R}_{++}}}
\newcommand{\exi}{\ensuremath{\exists\,}}
\newcommand{\ZZ}{\ensuremath{\mathbb Z}}
\newcommand{\Ss}{\ensuremath{\mathcal S}}
\newcommand{\ZZd}{\ensuremath{{\mathbb Z}^d}}
\newcommand{\ZZdp}{\ensuremath{{\mathbb N}^d}}
\newcommand{\NN}{\ensuremath{\mathbb N}}
\newcommand{\NNd}{\ensuremath{\mathbb N}^d}
\newcommand{\RRd}{\ensuremath{\mathbb R}^d}
\newcommand{\TTd}{\ensuremath{\mathbb T}^d}
\newcommand{\RRdp}{\ensuremath{\mathbb R}_+^d}
\newcommand{\WP}{\ensuremath{{W^{[P]}_2}}}
\newcommand{\UP}{\ensuremath{{U^{[P]}_2}}}
\begin{document}
\maketitle

\begin{abstract}
Let $P(D)$ be the differential operator induced by a polynomial 
$P$, and let $\UP$ be the class of multivariate periodic functions 
$f$ such that $\|P(D)(f)\|_2\leq 1$. The problem of
computing the asymptotic order of the Kolmogorov $n$-width 
$d_n(\UP,L_2)$ in the general case when $\UP$ is compactly 
embedded into $L_2$ has been open for a long time. In the 
present paper, we use convex analytical tools to solve it in 
the case when $P(D)$ is non-degenerate. 
\end{abstract}

\medskip
\noindent
{\bf Keywords.} asymptotic order $\cdot$ Kolmogorov $n$-widths 
$\cdot$ non-degenerate differential operator $\cdot$
convex duality

\medskip
\noindent
{\bf Mathematics Subject Classifications (2010)} \ 
41A10; 41A50; 41A63   

\newpage
\section{Introduction}

The aim of the present paper is to study Kolmogorov $n$-widths of
classes of multivariate periodic functions induced by a differential
operator. In order to describe the exact setting of the problem 
let us introduce some notation. 

We first recall the notion of Kolmogorov $n$-widths 
\cite{Ko36,Pink85}. Let $\mathcal{X}$ be a normed space, 
let $F$ be a nonempty subset of $\mathcal{X}$ such that $F=-F$, 
and let ${\mathcal G}_n$ be the class of all vector subspaces of 
$\XX$ of dimension at most $n$. The 
Kolmogorov $n$-width of $F$ in $\XX$ is 
\begin{equation}
\label{e:1}
d_n(F,\mathcal{X})=\inf_{G\in{\mathcal G}_n}\,\sup_{f\in F}\,
\inf_{g\in G}\,\|f-g\|_\XX.
\end{equation}
This notion quantifies the error of the best approximation to the 
elements of $F$ by elements in a vector subspace of 
$\XX$ of dimension at most $n$ \cite{Pink85,Ti60,Ti76}. 

In computational mathematics, the so-called $\varepsilon$-dimension
$n_\varepsilon(F,\XX)$ is used to quantify the 
computational complexity. It is defined by
\begin{equation}
\label{e:2}
n_\varepsilon(F,\XX) = 
\inf\Mmenge{n\in\NN}{(\exi G\in\mathcal{G}_n)\; \sup_{f \in F} \ 
\inf_{g \in G} \|f-g\|_\XX \le \varepsilon}.
\end{equation}
This approximation characteristic is the inverse of $d_n(F,\XX)$ 
in the sense that the quantity $n_\varepsilon(F,\XX)$ is the 
smallest integer
$n_\varepsilon$ such that the approximation of $F$ by a suitably
chosen approximant $n_\varepsilon$-dimensional subspace $G$ in 
$\XX$ gives an approximation error less than $\varepsilon$. 
Recently, there has been strong interest in applications of 
Kolmogorov $n$-widths, and its dual Gelfand $n$-widths, to 
compressive sensing \cite{BDDW08,Do06,FPRU10,Pink11}. 
Kolmogorov $n$-widths and $\varepsilon$-dimensions of classes 
of functions with mixed smoothness have also 
been employed in recent high-dimensional approximation 
studies \cite{CD13,DU13}. 

We consider functions on $\RRd$ which are $2\pi$-periodic in
each variable as functions defined on $\TTd=[-\pi,\pi]^d$. 
Denote by $L_2(\TTd)$ the Hilbert space of square-integrable
functions on $\TTd$ equipped with the standard scalar product, i.e., 
\begin{equation}
(\forall f\in L_2(\TTd))(\forall g\in L_2(\TTd))\quad
\scal{f}{g}=\frac{1}{(2\pi)^d}\int_{\TTd}f(x)\overline{g(x)} dx,
\end{equation}
and by $\Ss'(\TTd)$ the space of distributions on $\TTd$. 
The norm of $f\in L_2(\TTd)$ is $\|f\|_2=\sqrt{\scal{f}{f}}$ and, 
given $k\in\ZZ^d$, the $k$th Fourier coefficient of 
$f\in L_2(\TTd)$ is $\hat{f}(k)=\scal{f}{e^{i\scal{k}{\cdot}}}$. 
Every $f\in\Ss'(\TTd)$ can be identified with the formal 
Fourier series 
\begin{equation}
f=\sum_{k\in\ZZ^d}\hat{f}(k)e^{i\scal{k}{\cdot}},
\end{equation}
where the sequence $(\hat{f}(k))_{k\in\ZZ^d}$ is a tempered 
sequence \cite{Sch66,Ti76}. By Parseval's identity, $L_2(\TTd)$ is 
the subset of $\Ss'(\TTd)$ of all distributions $f$ for which
\begin{equation}
\sum_{k\in\ZZd}|{\hat f}(k)|^2 <\pinf.
\end{equation} 
Let $\alpha=(\alpha_1,\ldots,\alpha_d)\in\ZZdp$ and let
$f\in\Ss'(\TTd)$. We set 
\begin{equation}
\ZZ^d_0(\alpha)=\menge{(k_1,\ldots,k_d)\in\ZZ^d}
{(\forall j\in\{1,\ldots,d\})\;\;\alpha_j\neq 0\;\;
\Rightarrow\;\;k_j\neq 0}.
\end{equation}
As usual, we set $|\alpha|=\sum_{j=1}^d\alpha_j$ and, given 
$z=(z_1,\ldots,z_d)\in\CC^d$, we set
$z^\alpha=\prod_{j=1}^dz_j^{\alpha_j}$.
The $\alpha$th derivative of $f\in\Ss'(\TTd)$ is the distribution 
$f^{(\alpha)}\in\Ss'(\TTd)$ given through the identification
\begin{equation} 
\label{def[f^{(r)}]}
f^{(\alpha)}=
\sum_{k\in\ZZ^d_0(\alpha)} (ik)^\alpha \hat{f}(k) 
e^{i\scal{k}{\cdot}}.
\end{equation}
The differential operator $D^\alpha$ on $\Ss'(\TTd)$ is defined by 
$D^\alpha\colon f\mapsto (-i)^{|\alpha|} f^{(\alpha)}$. Now let 
$A\subset\ZZdp$ be a nonempty finite set, let 
$(c_\alpha)_{\alpha\in A}$ be 
nonzero real numbers, and define a polynomial by
\begin{equation} 
P\colon x\mapsto\sum_{\alpha\in A} c_\alpha x^\alpha.
\end{equation}
The differential operator $P(D)$ on $\Ss'(\TTd)$ induced by 
$P$ is 
\begin{equation} 
\label{def[P(D)]}
P(D)=\sum_{\alpha\in A} c_\alpha D^\alpha. 
\end{equation}
Set 
\begin{equation} 
\WP=\menge{f\in\Ss'(\TTd)}{P(D)(f)\in L_2(\TTd)},
\end{equation}
denote the seminorm of $f\in\WP$ by 
\begin{equation}
\|f\|_{\WP}=\|P(D)(f)\|_2,
\end{equation}
and let 
\begin{equation} 
\label{e:UP}
\UP=\Menge{f\in\WP}{\|f\|_{\WP}\leq 1}.
\end{equation}
The problem of computing asymptotic orders of 
$d_n(\UP,L_2(\TTd))$ in the general case when $\WP$ is compactly 
embedded into $L_2(\TTd)$ has been open for a long time; see, e.g., 
\cite[Chapter~III]{Te93} for details.
Our main contribution is to solve it for a non-degenerate
differential operator $P(D)$ (see Definition~\ref{d:nondegenerate}). 
Using convex-analytical tool, we establish the asymptotic order
\begin{equation} 
\label{e:2013-09-17}
d_n\big(\UP,L_2(\TTd)\big)\asymp n^{-\varrho}(\log n)^{\nu\varrho},
\end{equation}
where $\varrho$ and $\nu$ depend only on $P$.

The first exact values of $n$-widths of univariate Sobolev
classes were obtained  by Kolmogorov \cite{Ko36} (see also
\cite[pp.~186--189]{Ko85}). The problem of computing the
asymptotic order of $d_n(\UP, L_2(\TTd))$ is directly related to 
hyperbolic crosses trigonometric approximations and to $n$-widths 
of classes multivariate periodic functions with a bounded mixed 
smoothness. This line of work was initiated by Babenko in
\cite{Ba60a,Ba60}. In particular, the asymptotic orders of
$n$-widths in $L_2(\TTd)$ of these classes were established in 
\cite{Ba60a}. Further work on asymptotic orders and hyperbolic cross
approximation can be found in \cite{DD86,DD97,Te93} and recent
developments in \cite{Kush12,Schm04,Sick09,Wang12}.
In \cite{DD84}, the strong asymptotic order 
of $d_n(U^A_2,L_2(\TTd))$ was computed in the case when $U^A_2$ is
the closed unit ball of the space $W^A_2$ of functions with several 
bounded mixed derivatives (see Subsection~\ref{W^A_2} for a precise
definition). 

The remainder of the paper is organized as follows. 
In Section~\ref{Preliminary}, we provide as auxiliary results 
Jackson-type and Bernstein-type inequalities for trigonometric 
approximations of functions from $\WP$. We also characterize
the compactness of $\UP$ in $L_2(\TTd)$ and the non-degenerateness 
of $P(D)$. 
In Section~\ref{Widths}, we present the main result of the
paper, namely the asymptotic order of $d_n\big(\UP, L_2(\TTd)\big)$
in the case when $P(D)$ is non-degenerate.
In Section~\ref{Norms equivalences}, we derive norm equivalences
relative to $\|\cdot\|_{\WP}$ and, based on them, we
provide examples of $n$-widths $d_n(\UP,L_2(\TTd))$ 
for non-degenerate differential operators.

\section{Preliminaries} 
\label{Preliminary}

\subsection{Notation, standing assumption, and definitions}

We set $\NN=\{0,1,\ldots,\}$, $\NN^*=\{1,2,\ldots,\}$, 
$\RP=\left[0,\pinf\right[$, and $\RPP=\left]0,\pinf\right[$. 
Let $\Theta$ be an abstract set, and let $\Phi$ and $\Psi$ be
functions from $\Theta$ to $\RR$. Then we write
\begin{equation}
(\forall \theta\in\Theta)\quad\Phi(\theta)\asymp\Psi(\theta) 
\end{equation}
if there exist $\gamma_1\in\RPP$ and $\gamma_2\in\RPP$ such that
$(\forall \theta\in\Theta)$ $\gamma_1\Phi(\theta)\leq\Psi(\theta)\leq
\gamma_2\Phi(\theta)$.
For every $j\in\{1,\ldots,d\}$, $u^j$ denotes the $j$ standard unit
vector of $\RR^d$  and 
\begin{equation}
\label{e:hanoi2014-05-23h}
\mathcal{R}^j=\menge{\lambda u^j}{\lambda\in\RPP}
\end{equation}
the $j$th standard strict ray.

\begin{definition}
\label{d:hanoi2014-05-21}
Let $B$ be a nonempty finite subset of $\NN^d$. The convex hull 
$\conv{B}$ of $B$ is the polyhedron spanned by $B$,
\begin{equation}
\label{e:hanoi2014-05-21b}
\Delta(B)=\Menge{\alpha\in B}{\menge{\lambda\alpha}{\lambda\in
\left[1,\pinf\right[}\cap\conv{B}=\{\alpha\}},
\end{equation}
and $\vartheta(B)$ is the set of vertices of 
$\conv{\Delta(B)}$. In addition,
\begin{equation}
\label{e:Omega}
(\forall t\in\RP)\quad
\Omega_B(t)=\Mmenge{k\in\ZZdp}{\max_{\alpha\in B} k^\alpha\leq t}.
\end{equation}
\end{definition}

Throughout the paper, the convention $0^0$ is adopted and the
following standing assumption is made.

\begin{assumption}
\label{a:standing}
$A$ is a nonempty finite subset of $\ZZdp$ and 
$(c_\alpha)_{\alpha\in A}$ are nonzero real numbers.
We set 
\begin{equation} 
\label{def[P]}
P\colon x\mapsto\sum_{\alpha\in A} c_\alpha x^\alpha
\quad\text{and}\quad\tau=\inf_{k\in\ZZd}|P(k)|.
\end{equation}
Moreover, for every $t\in\RP$, we set
\begin{equation}
K(t)=\menge{k\in\ZZd}{|P(k)|\leq t}
\quad\text{and}\quad
V(t)=\Mmenge{f\in\Ss'(\TTd)}{
f=\sum_{k \in K(t)}{\hat f}(k) e^{i\scal{k}{\cdot}}}.
\end{equation}
\end{assumption}

\begin{remark}
If $0\in A$, then $0\in\vartheta(A)$ and
$\Delta(\conv{A})=\Delta(A)$, so that
$\vartheta(\conv{A})=\vartheta(A)$.
Now suppose that $t\in\left]\tau,\pinf\right[$. Then 
$K(t)\neq\varnothing$ and $\dim V(t)=\card K(t)$, 
where $\card K(t)$ denotes the cardinality of $K(t)$. 
In addition, if $\card K(t)<\pinf$,
then $V(t)$ is the space of trigonometric polynomials with 
frequencies in $K(t)$. 
\end{remark}

\begin{definition}
\label{d:nondegenerate}
The \emph{Newton diagram} of $P$ is $\Delta(A)$ and the
\emph{Newton polyhedron} of $P$ is $\conv{A}$.
The intersection of $\conv{A}$ with a supporting hyperplane 
of $\conv{A}$ is a \emph{face} of $\conv{A}$; 
$\Sigma(A)$ is the set of intersections of $A$ with a face of 
$\conv{A}$.
The differential operator $P(D)$ is \emph{non-degenerate} if
$P$ and, for every $\sigma\in\Sigma(A)$, 
$P_\sigma\colon\RR^d\to\RR\colon x\mapsto
\sum_{\alpha\in\sigma}c_\alpha x^\alpha$ do not vanish outside 
the coordinate planes of $\RR^d$, i.e.,  
\begin{equation} 
\label{condition[|P(x)|>]}
\big(\forall x\in\RRd\big)\quad\Bigg(
\prod_{j=1}^d x_j \neq 0\quad\Rightarrow\quad
\big(\forall\sigma\in\Sigma(A)\big)\quad P(x)P_\sigma(x)\neq
0\Bigg).
\end{equation}
\end{definition}

\begin{remark}
\label{r:hanoi2014-04-23}
Suppose that $P$ is non-degenerate and let $\alpha\in\vartheta(A)$. 
Then it follows from \eqref{condition[|P(x)|>]} that all the 
components of $\alpha$ are even.
\end{remark}

\subsection{Trigonometric approximations}

We first prove a Jackson-type inequality.

\begin{lemma} 
\label{l:10}
Let $t\in\RPP$ and define a linear operator 
$S_t\colon\Ss'(\TTd)\to\Ss'(\TTd)$ by
\begin{equation}
\label{e:2013-09-20a}
\big(\forall f\in \Ss'(\TTd)\big)\quad 
S_t(f)=\sum_{k \in K(t)} {\hat f}(k) e^{i\scal{k}{\cdot}}.
\end{equation}
Let $f\in\WP$ and suppose that $t>\tau$. 
Then the distribution $f-S_t(f)$ represents a function in
$L_2(\TTd)$ and 
\begin{equation} 
\label{ineq[|f-S_t(f)|]}
\|f-S_t(f)\|_2\leq t^{-1}\|f\|_{\WP}.
\end{equation}
\end{lemma}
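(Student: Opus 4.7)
The plan is to diagonalize $P(D)$ in the Fourier basis and then apply Parseval's identity. First I would check that, with the convention $0^0=1$, the formula \eqref{def[f^{(r)}]} combined with the definition $D^\alpha f=(-i)^{|\alpha|}f^{(\alpha)}$ gives the clean Fourier-multiplier representation $D^\alpha(e^{i\scal{k}{\cdot}})=k^\alpha e^{i\scal{k}{\cdot}}$ for every $k\in\ZZd$: the factors $(-i)^{|\alpha|}$ and $i^{|\alpha|}$ in $(ik)^\alpha$ cancel, and the indices $k$ excluded by $\ZZ^d_0(\alpha)$ are exactly those for which some $k_j=0$ with $\alpha_j\neq 0$, so $k^\alpha=0$ there and extending the sum to all of $\ZZd$ is harmless. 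Summing over $\alpha\in A$ then yields $P(D)(f)=\sum_{k\in\ZZd}P(k)\hat{f}(k)\,e^{i\scal{k}{\cdot}}$.

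Given this, Parseval's identity applied to the hypothesis $P(D)(f)\in L_2(\TTd)$ gives $\|f\|_{\WP}^2=\sum_{k\in\ZZd}|P(k)|^2|\hat{f}(k)|^2<\pinf$. The construction of $S_t$ in \eqref{e:2013-09-20a} makes $f-S_t(f)$ the formal Fourier series supported on $\ZZd\setminus K(t)$, i.e., on those $k$ for which $|P(k)|>t>0$. For such $k$ the elementary bound $|\hat{f}(k)|^2\leq t^{-2}|P(k)|^2|\hat{f}(k)|^2$ holds, and summing over $k\notin K(t)$ gives $\sum_{k\notin K(t)}|\hat{f}(k)|^2\leq t^{-2}\|f\|_{\WP}^2<\pinf$. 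This square-summability both shows that $f-S_t(f)$ represents an element of $L_2(\TTd)$ and, via Parseval in the other direction, delivers the bound \eqref{ineq[|f-S_t(f)|]}.

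The argument is essentially mechanical once $P(D)$ is identified as the Fourier multiplier with symbol $k\mapsto P(k)$; no use is made of non-degeneracy or of the geometric structure of $A$, and $t>\tau$ is only needed to ensure $K(t)\neq\varnothing$ in the statement (the inequality itself needs only $t>0$ together with the fact that $|P(k)|>t$ outside $K(t)$). The only point demanding real care is the first paragraph: the definition \eqref{def[f^{(r)}]} restricts the expansion of $f^{(\alpha)}$ to $\ZZ^d_0(\alpha)$, so I would devote the opening of the proof to reconciling this restriction with the $0^0=1$ convention and with linearity over $\alpha\in A$, so that $P(D)$ becomes unambiguously the multiplier operator with symbol $P(k)$ on all of $\ZZd$.
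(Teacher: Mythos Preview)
Your proposal is correct and follows essentially the same route as the paper: both arguments rest on Parseval's identity $\|f\|_{\WP}^2=\sum_{k\in\ZZd}|P(k)|^2|\hat f(k)|^2$ together with the observation that $|P(k)|>t$ for $k\notin K(t)$. The only difference is that you spell out in detail why $P(D)$ acts as the Fourier multiplier with symbol $P(k)$ (reconciling the $\ZZ^d_0(\alpha)$ restriction with the $0^0=1$ convention), whereas the paper simply invokes \eqref{e:parseval} without that preamble.
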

\begin{proof}
Set $g=f-S_t(f)$. Then $g\in\Ss'(\TTd)$. On the other hand,
Parseval's identity yields
\begin{equation} 
\label{e:parseval}
\|f\|_{\WP}^2=\sum_{k\in\ZZd} |P(k)|^2 |{\hat f}(k)|^2.
\end{equation} 
Hence, 
\begin{align}
\sum_{k\in\ZZd} |{\hat g}(k)|^2
&=\sum_{k\in\ZZd\setminus K(t)} |{\hat f}(k)|^2
\nonumber\\[1.5ex]
&\leq\bigg(\sup_{k\in\ZZd\setminus K(t)} |P(k)|^{-2}\bigg) 
\sum_{k\in\ZZd\setminus K(t)} |P(k)|^2 |{\hat f} (k)|^2 
\nonumber\\[1.5ex]
&\leq t^{-2}\|f\|_{\WP}^2,
\end{align}
which means that $f-S_t(f)$ represents a function in $L_2(\TTd)$ for 
which \eqref{ineq[|f-S_t(f)|]} holds.
\end{proof}

\begin{corollary} 
\label{corollary[|f-S_t(f)|<]}
Let $t\in\left]\tau,\pinf\right[$. Then 
\begin{equation}
\sup_{f\in\UP} \ 
\inf_{\substack{g \in V(t)\\ f-g \in L_2(\TTd)}}  \|f-g\|_2
\leq t^{-1}.
\end{equation}
\end{corollary}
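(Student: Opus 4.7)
The plan is to observe that this corollary is essentially an immediate consequence of Lemma \ref{l:10} together with the definitions of $\UP$ and $V(t)$. The strategy is to exhibit, for each $f \in \UP$, a specific admissible $g \in V(t)$ that already achieves the desired bound; no further optimization is needed.

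More concretely, I would fix an arbitrary $f \in \UP$ and take $g = S_t(f)$. Three things must then be checked. First, $g \in V(t)$: this is immediate from the definition \eqref{e:2013-09-20a} of $S_t$, since the Fourier coefficients of $S_t(f)$ are supported in $K(t)$. Second, $f - g \in L_2(\TTd)$: since $t > \tau$ and $f \in \WP$, Lemma \ref{l:10} guarantees that $f - S_t(f)$ represents a function in $L_2(\TTd)$. Third, the norm estimate: Lemma \ref{l:10} yields
\begin{equation}
\|f - S_t(f)\|_2 \leq t^{-1}\|f\|_{\WP} \leq t^{-1},
\end{equation}
where the last inequality uses that $f \in \UP$ means precisely $\|f\|_{\WP} \leq 1$ by \eqref{e:UP}.

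Consequently, for every $f \in \UP$ the constrained infimum over admissible $g \in V(t)$ is at most $t^{-1}$, and taking the supremum over $f \in \UP$ preserves this bound. There is really no main obstacle here: the corollary is a direct packaging of Lemma \ref{l:10} with the unit-ball normalization built into $\UP$, and the only subtlety worth flagging is ensuring that the particular choice $g = S_t(f)$ satisfies the side constraint $f - g \in L_2(\TTd)$, which is exactly what Lemma \ref{l:10} provides.
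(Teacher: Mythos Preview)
Your proposal is correct and is exactly the intended argument: the paper states the corollary immediately after Lemma~\ref{l:10} without proof because it follows by taking $g=S_t(f)$ and using $\|f\|_{\WP}\leq 1$, precisely as you wrote.
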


Next, we prove a Bernstein-type inequality. 

\begin{lemma} 
\label{l:B} 
Let $t\in\left]\tau,\pinf\right[$ and let 
$f\in V(t)\cap L_2(\TTd)$. Then
\begin{equation}
\label{e:fn439n}
\|f\|_{\WP}\leq t\|f\|_2.
\end{equation}
\end{lemma}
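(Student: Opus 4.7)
The plan is to use Parseval's identity applied to $P(D)(f)$ (equation \eqref{e:parseval}), combined with the Fourier support restriction imposed by the condition $f \in V(t)$.

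More precisely, first I would observe that since $f \in V(t)$, its Fourier expansion is supported in $K(t)$, i.e., $\hat f(k) = 0$ whenever $k \in \ZZd \setminus K(t)$. Next, since $f$ is assumed to be in $L_2(\TTd)$, the identity \eqref{e:parseval} from the proof of Lemma~\ref{l:10} applies and yields
\begin{equation}
\|f\|_{\WP}^2 = \sum_{k\in\ZZd} |P(k)|^2\,|\hat f(k)|^2 = \sum_{k \in K(t)} |P(k)|^2\,|\hat f(k)|^2.
\end{equation}

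Then, by the very definition of $K(t)$, we have $|P(k)| \leq t$ for every $k \in K(t)$, so the last sum is bounded above by
\begin{equation}
t^2 \sum_{k\in K(t)} |\hat f(k)|^2 = t^2 \sum_{k\in\ZZd} |\hat f(k)|^2 = t^2 \|f\|_2^2,
\end{equation}
where the final equality is again Parseval's identity, this time applied to $f$ itself. Taking square roots gives \eqref{e:fn439n}.

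I do not anticipate any genuine obstacle: the statement is essentially the dual (spectral concentration) companion to Lemma~\ref{l:10}, and the mechanism is the same two-sided estimate on $|P(k)|$ on the spectral set $K(t)$ versus its complement. The only small caveat to verify is that the hypothesis $f \in L_2(\TTd)$ is needed so that Parseval can be legitimately invoked on both sides (note that $V(t)$ is defined as a space of formal Fourier series in $\Ss'(\TTd)$, which is why the intersection with $L_2(\TTd)$ is made explicit in the statement).
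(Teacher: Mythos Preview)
Your proof is correct and matches the paper's own argument essentially line for line: Parseval's identity restricted to $K(t)$, followed by the bound $|P(k)|\leq t$ on $K(t)$ and another application of Parseval. Your additional remark about why the hypothesis $f\in L_2(\TTd)$ is needed is apt and slightly more explicit than the paper.
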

\begin{proof}
By \eqref{e:parseval}, we have 
\begin{align}
\|f\|_{\WP}^2
=\sum_{k \in K(t)} |P(k)|^2 |{\hat f} (k)|^2 
\leq\bigg(\sup_{k \in K(t)}|P(k)|^2\bigg)  
\sum_{k \in K(t)}|{\hat f} (k)|^2 
\leq t^2\|f\|_2^2,
\end{align}
which establishes \eqref{e:fn439n}.
\end{proof}

\subsection{Compactness and non-degenerateness}

We start with a characterization of the compactness of the 
unit ball defined in \eqref{e:UP}.

\begin{lemma} 
\label{l:7} 
The set $\UP$ is a compact subset of $L_2(\TTd)$ if and only if 
the following hold:
\begin{enumerate}
\item
\label{l:1i}
For every $t\in\left]\tau,\pinf\right[$, $K(t)$ is finite. 
\item 
\label{l:1ii}
$\tau>0$.
\end{enumerate}
\end{lemma}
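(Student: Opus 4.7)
My plan is to prove the equivalence directly in both directions, with the sufficiency side combining Parseval's identity \eqref{e:parseval} with the Jackson-type bound of Lemma~\ref{l:10}.

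For the necessity direction, I would first establish (ii) from compactness. Assume for contradiction that $\tau=0$. Then either some $k_0\in\ZZd$ satisfies $P(k_0)=0$, in which case the entire ray $\{\lambda e^{i\scal{k_0}{\cdot}}:\lambda\in\RP\}$ lies in $\UP$, or there is a sequence $(k_n)$ in $\ZZd$ with $0<|P(k_n)|\to 0$, in which case $f_n=|P(k_n)|^{-1}e^{i\scal{k_n}{\cdot}}$ lies in $\UP$ and satisfies $\|f_n\|_2=|P(k_n)|^{-1}\to\pinf$. Either conclusion contradicts the $L_2(\TTd)$-boundedness implied by compactness. Granted $\tau>0$, I would deduce (i): were $K(t)$ infinite for some $t\in\left]\tau,\pinf\right[$, I would pick pairwise distinct $(k_n)\subset K(t)$ and set $f_n=|P(k_n)|^{-1}e^{i\scal{k_n}{\cdot}}\in\UP$ (well-defined since $|P(k_n)|\geq\tau>0$); orthogonality of distinct exponentials then yields
\begin{equation*}
\|f_n-f_m\|_2^2=|P(k_n)|^{-2}+|P(k_m)|^{-2}\geq 2t^{-2}\quad (n\neq m),
\end{equation*}
so $(f_n)$ admits no $L_2(\TTd)$-convergent subsequence, contradicting compactness.

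For the sufficiency direction, assume (i) and (ii). Parseval's identity \eqref{e:parseval} together with $\tau>0$ gives $\|f\|_2\leq\tau^{-1}\|f\|_{\WP}$ for every $f\in\WP$, so $\UP$ is a $\tau^{-1}$-bounded subset of $L_2(\TTd)$. For total boundedness, fix $\varepsilon\in\RPP$, pick $t>\max\{\tau,\varepsilon^{-1}\}$, and note that by (i) the space $V(t)$ is finite-dimensional of dimension $\card K(t)$. For every $f\in\UP$, Lemma~\ref{l:10} yields $\|f-S_t(f)\|_2\leq t^{-1}<\varepsilon$, while $\|S_t(f)\|_2\leq\tau^{-1}$. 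A finite $\varepsilon$-net of the $\tau^{-1}$-ball in $V(t)$ therefore produces a $2\varepsilon$-net for $\UP$. For closedness, take $f_n\in\UP$ with $f_n\to f$ in $L_2(\TTd)$; Cauchy--Schwarz gives $\hat{f}_n(k)\to\hat{f}(k)$ for every $k\in\ZZd$, and Fatou's lemma applied to \eqref{e:parseval} yields $\|f\|_{\WP}^2\leq\liminf_n\|f_n\|_{\WP}^2\leq 1$, so $f\in\UP$. Completeness of $L_2(\TTd)$ then delivers compactness.

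The main obstacle is the $\tau=0$ analysis in the forward direction, where one has to separately address the possibility that the infimum defining $\tau$ is attained (so that $P$ vanishes somewhere on $\ZZd$ and $\UP$ already contains an unbounded one-dimensional subspace) and the possibility that it is merely approached (requiring a normalization argument along a minimizing sequence). Once this split is handled, the rest is the standard ``bounded plus finite-dimensional Jackson approximation plus lower-semicontinuity of the defining norm'' compactness template.
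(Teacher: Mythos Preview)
Your proof is correct. It follows the same overall template as the paper's proof—Jackson-type approximation plus boundedness for sufficiency, and unboundedness/non-total-boundedness for necessity—but differs in two places worth noting. First, for the necessity of (i) the paper invokes the Bernstein inequality (Lemma~\ref{l:B}) to embed the ball $\{f\in V(\tilde t)\cap L_2(\TTd):\|f\|_2\le 1/\tilde t\}$ of an infinite-dimensional subspace into $\UP$ and then appeals to the non-compactness of closed balls in infinite dimension, whereas you construct an explicit $\sqrt{2}\,t^{-1}$-separated sequence of single exponentials; your argument is more elementary and avoids Lemma~\ref{l:B} entirely. Second, to show $\WP\subset L_2(\TTd)$ in the sufficiency direction the paper routes through Lemma~\ref{l:10} and the finiteness of $K(t)$, while you obtain it in one line from $\tau>0$ and Parseval; your route is shorter and shows that condition~(ii) alone already forces the embedding. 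The paper's approach, on the other hand, packages the compactness criterion as the conjunction of two auxiliary conditions (iii)--(iv) and identifies each with one of (i)--(ii), which makes the logical structure slightly more transparent.
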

\begin{proof}
To prove sufficiency, suppose that \ref{l:1i} and \ref{l:1ii} hold,
and fix $t\in\left]\tau,\pinf\right[$.
By \ref{l:1i}, $V(t)$ is a set of trigonometric 
polynomials and, consequently, a subset of $L_2(\TTd)$. 
In particular, using the notation \eqref{e:2013-09-20a},
$(\forall f\in\Ss'(\TTd))$ $S_t(f)\in L_2(\TTd)$.
Hence, by Lemma~\ref{l:10}, 
\begin{equation}
\Big(\forall f\in\WP\Big)\quad f=(f-S_t(f))+S_t(f)\in L_2(\TTd). 
\end{equation}
Thus, $\WP\subset L_2(\TTd)$. On the other hand, 
\eqref{e:parseval} implies that 
$\UP$ is a closed subset of $L_2(\TTd)$. 
Therefore, $\UP$ is compact in $L_2(\TTd)$ if,
for every $\varepsilon\in\RPP$, it has a finite 
$\varepsilon$-net in $L_2(\TTd)$ or, equivalently, 
if the following following two conditions are satisfied:
\begin{enumerate}
\setcounter{enumi}{2}
\item 
\label{l:1iii}
For every $\varepsilon\in\RPP$, there exists a finite-dimensional 
vector subspace $G_\varepsilon$ of $L_2(\TTd)$ such that
\begin{equation}
\sup_{f \in\UP} \, \inf_{g\in G_\varepsilon}\|f-g\|_2\leq 
\varepsilon.
\end{equation}
\item
\label{l:1iv}
$\UP$ is bounded in $L_2(\TTd)$.
\end{enumerate}
It follows from \eqref{e:parseval} that 
\ref{l:1ii}$\Leftrightarrow$\ref{l:1iv}. On the other hand, 
since $\dim V(t)=\card K(t)$, 
Corollary~\ref{corollary[|f-S_t(f)|<]} 
yields \ref{l:1i}$\Rightarrow$\ref{l:1iii}.
To prove necessity, suppose that \ref{l:1i} does not hold. 
Then $\dim V(\tilde{t})=\card K(\tilde{t})=\pinf$ for some 
$\tilde{t}\in\RPP$. By Lemma~\ref{l:B}, 
$\widetilde{U}=\menge{f\in V(\tilde{t})\cap L_2(\TTd)}{\|f\|_2
\leq 1/\tilde{t}}$ is a subset of $\UP$ which is not compact 
in $L_2(\TTd)$. If \ref{l:1ii} does not hold, then
$\UP\cap L_2(\TTd)$ is unbounded and, consequently, 
not compact in $L_2(\TTd)$. 
\end{proof}

The following lemma characterizes the non-degenerateness of $P(D)$. 

\begin{lemma}
\label{l:non-deg} 
$P(D)$ is non-degenerate if and only if
\begin{equation}
\label{e:2013-09-16}
(\exi\gamma\in\RPP)(\forall x\in\RRd)\quad |P(x)|\geq\gamma 
\max_{\alpha\in\vartheta(A)} |x^\alpha|.
\end{equation}
\end{lemma}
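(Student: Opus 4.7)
The plan is to prove both directions using the quasi-homogeneous rescalings $y_v(t)=(t^{v_1}x_1,\ldots,t^{v_d}x_d)$, $t\in\RPP$, which interpolate between $P$ and its face polynomials $P_\sigma$ through the elementary identity
\begin{equation*}
t^{-M(v)}P(y_v(t))=P_{\sigma(v)}(x)+\sum_{\alpha\in A\setminus\sigma(v)}c_\alpha t^{\scal{v}{\alpha}-M(v)}x^\alpha\longrightarrow P_{\sigma(v)}(x)\quad\text{as}\;\;t\to+\infty,
\end{equation*}
where $M(v)=\max_{\alpha\in A}\scal{v}{\alpha}$ and $\sigma(v)=\menge{\alpha\in A}{\scal{v}{\alpha}=M(v)}\in\Sigma(A)$.

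For the sufficiency of \eqref{e:2013-09-16}, note that the inequality applied at any $x\in\RR^d$ with $\prod_{j=1}^d x_j\neq 0$ directly gives $|P(x)|>0$, since the right-hand side is strictly positive. To reach the analogous conclusion for each face polynomial, apply \eqref{e:2013-09-16} to $y_v(t)$ with $v$ an outer normal of the supporting hyperplane that cuts out $\sigma$, divide by $t^{M(v)}$, and let $t\to+\infty$; the limit yields $|P_\sigma(x)|\geq\gamma\max_{\beta\in\vartheta(A)\cap\sigma}|x^\beta|>0$ whenever $\prod_{j=1}^d x_j\neq 0$, which is the non-degeneracy condition for $P_\sigma$.

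For the necessity, assume $P(D)$ is non-degenerate and, for contradiction, that no admissible $\gamma$ exists: there is a sequence $(x^{(n)})$ in $\RR^d$ with $\prod_{j=1}^d x^{(n)}_j\neq 0$ and $|P(x^{(n)})|/\max_{\alpha\in\vartheta(A)}|(x^{(n)})^\alpha|\to 0$. Passing to subsequences, fix the sign pattern of the coordinates and set $t^{(n)}_j=\log|x^{(n)}_j|$. If $(t^{(n)})$ is bounded, extract a limit yielding $x^*\in\RR^d$ with $\prod_{j=1}^d x^*_j\neq 0$ and $P(x^*)=0$, contradicting non-degeneracy of $P$. Otherwise $|t^{(n)}|\to+\infty$; pass to a subsequence with $t^{(n)}/|t^{(n)}|\to v$, $|v|=1$, which selects a face $\sigma=\sigma(v)\in\Sigma(A)$. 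Rescaling coordinates by $e^{-v_j|t^{(n)}|}$ and normalizing by the leading exponential factor isolates $P_\sigma$ in the limit, and its non-vanishing on $\{\prod_j x_j\neq 0\}$, together with the corresponding max over $\vartheta(A)\cap\sigma$, produces the contradiction.

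The main obstacle is the unbounded case: a single rescaling by $v$ captures asymptotics only in that direction, while the rescaled variables may remain unbounded in directions tangent to $\sigma$. The standard remedy is to iterate the face-rescaling procedure, inducting on the dimension of $\sigma$---each intermediate face polynomial inherits the non-degeneracy condition on its own Newton polyhedron, and the procedure terminates at a vertex of $\conv A$, reducing the problem to a monomial that is trivially nonzero off the coordinate planes. An equally effective alternative is the curve-selection (Puiseux) lemma, which replaces $(x^{(n)})$ by a single analytic curve along which $P$ is asymptotically a nonzero face polynomial.
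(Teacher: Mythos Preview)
The paper's proof is essentially a two-line citation: it invokes a theorem of Gindikin and Miha\u{\i}lov asserting that non-degeneracy is equivalent to the bound $|P(x)|\geq\gamma_1\sum_{\alpha\in\vartheta(A)}|x^\alpha|$, and then observes the trivial equivalence $\sum_{\alpha}|x^\alpha|\asymp\max_{\alpha}|x^\alpha|$ for a finite index set. Your proposal, by contrast, sketches a self-contained proof of the Gindikin--Miha\u{\i}lov equivalence itself via quasi-homogeneous rescalings and a face-iteration/compactness scheme. This is indeed the standard route to that result, and your outline of the necessity direction---including the honest acknowledgment that a single rescaling does not suffice and that one must induct on the face lattice or invoke curve selection---captures the right mechanism. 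What you gain is independence from the literature; what the paper gains is brevity.

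Two points deserve tightening. First, in the sufficiency argument your limit yields
\[
|P_\sigma(x)|\;\geq\;\gamma\max_{\beta\in\vartheta(A)\cap\sigma}|x^\beta|,
\]
and you claim the right-hand side is strictly positive. This requires $\vartheta(A)\cap\sigma\neq\varnothing$. But $\vartheta(A)$ is defined as the vertex set of $\conv{\Delta(A)}$, not of $\conv{A}$; a vertex $\alpha^*$ of $\conv{A}$ for which some $\lambda\alpha^*$ with $\lambda>1$ lies in $\conv{A}$ is excluded from $\Delta(A)$ and hence from $\vartheta(A)$ (take for instance $A=\{(1,0),(2,0),(0,1)\}$). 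For $0$-dimensional faces this is harmless since $P_\sigma$ is then a nonzero monomial, but for higher-dimensional faces you should argue separately---either show that the hypothesis \eqref{e:2013-09-16} forces every vertex of $\conv{A}$ to lie in $\vartheta(A)$, or handle such faces directly. Second, in the necessity direction the offending sequence $(x^{(n)})$ could a priori lie on a coordinate hyperplane; a one-line continuity/perturbation remark would justify the assumption $\prod_j x^{(n)}_j\neq 0$.
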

\begin{proof}
As proved in \cite{Gi74,Mi67}, $P(D)$ is non-degenerate if and 
only if
\begin{equation}
\label{e:2013-09-17b}
(\exi \gamma_1\in\RPP)(\forall x\in\RRd)\quad |P(x)|\geq 
\gamma_1\sum_{\alpha\in\vartheta(A)}|x^\alpha|.
\end{equation}
Hence, since there exist $\gamma_2\in\RPP$ and $\gamma_3\in\RPP$ 
such that 
\begin{equation}
(\forall x\in\RR^d)\quad\gamma_2\max_{\alpha\in\vartheta(A)}
|x^\alpha|\leq\sum_{\alpha\in\vartheta(A)}|x^\alpha|\leq
\gamma_3\max_{\alpha\in\vartheta(A)}|x^\alpha|, 
\end{equation}
the proof is complete.
\end{proof}

\begin{lemma} 
\label{l:6} 
Let $B$ be a nonempty finite subset of $\NN^d$ and
let $t\in\RP$. Then 
\begin{equation}
\Omega_B(t)=\Mmenge{k\in\ZZdp}{\max_{\alpha\in B} k^\alpha\leq t}
\end{equation}
is finite if and only if
\begin{equation}
\label{e:hanoi2014-05-22}
(\forall j\in\{1,\ldots,d\})\quad
B\cap\mathcal{R}^j\neq\varnothing.
\end{equation}
\end{lemma}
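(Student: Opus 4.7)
The plan is to prove the two implications separately, proceeding by contrapositive in the necessity direction. The sufficiency reduces to noticing that a monomial lying on each coordinate axis of $B$ bounds the corresponding coordinate of any $k\in\Omega_B(t)$. The necessity, conversely, exhibits an infinite subset of $\Omega_B(t)$ along any axis that $B$ misses.

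For the sufficiency, I would assume \eqref{e:hanoi2014-05-22} and, for each $j\in\{1,\ldots,d\}$, pick an element $\alpha^{(j)}\in B\cap\mathcal{R}^j$. Since $B\subset\NN^d$, this element has the form $\alpha^{(j)}=m_j u^j$ with $m_j\in\NN^*$. Then, for every $k=(k_1,\ldots,k_d)\in\Omega_B(t)$,
\begin{equation}
k_j^{m_j}=k^{\alpha^{(j)}}\leq\max_{\alpha\in B}k^\alpha\leq t,
\end{equation}
so that $k_j\leq t^{1/m_j}$. Hence $\Omega_B(t)$ is contained in the finite box $\prod_{j=1}^d\{0,1,\ldots,\lfloor t^{1/m_j}\rfloor\}$ and is therefore finite.

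For the necessity, I would argue by contrapositive: suppose there exists $j\in\{1,\ldots,d\}$ with $B\cap\mathcal{R}^j=\varnothing$. Then every $\alpha\in B$ satisfies either $\alpha=0$, or $\alpha_j=0$ with some other $\alpha_i>0$, or $\alpha_j>0$ together with some $\alpha_i>0$ for $i\neq j$. Now consider the infinite family of vectors $k(n)=nu^j$ for $n\in\NN^*$. Using the convention $0^0=1$, a case split on $\alpha\in B$ gives $k(n)^\alpha=1$ when $\alpha=0$ and $k(n)^\alpha=0$ in the two remaining cases (since some coordinate $k(n)_i=0$ meets a strictly positive exponent $\alpha_i$). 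Hence $\max_{\alpha\in B}k(n)^\alpha\leq 1$, so $\{k(n):n\in\NN^*\}\subset\Omega_B(t)$, and $\Omega_B(t)$ is infinite.

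The main subtlety lies in the necessity step: one must be vigilant with the convention $0^0=1$ when evaluating $k(n)^\alpha$ for vectors supported on a single axis, because the argument requires precisely that every $\alpha\in B\setminus\{0\}$ forces some vanishing factor in the product. The conclusion then applies whenever $t\geq 1$, which is the regime in which the lemma will subsequently be invoked (in particular $t>\tau$ in combination with the standing assumptions).
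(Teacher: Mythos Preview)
Your proof is correct and follows essentially the same approach as the paper's: bound each coordinate $k_j$ via the axis monomial for sufficiency, and embed the infinite set $\{nu^j:n\in\NN\}$ into $\Omega_B(t)$ for the contrapositive of necessity. Your added remark about needing $t\geq 1$ when $0\in B$ is a valid refinement that the paper's proof glosses over; as you note, the applications only require the statement for large $t$, so this does not affect the downstream results.
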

\begin{proof}  
If \eqref{e:hanoi2014-05-22} holds, then 
$(\forall j\in\{1,\ldots,d\})(\exi a_j\in\RPP)$
$a_ju^j\in B\cap\mathcal{R}^j$. Hence, 
\eqref{e:Omega} implies that $\Omega_B(t)\subset\bigcap_{j=1}^d
\menge{k\in\NN^d}{k_j\leq t^{1/a_j}}$ and, therefore, 
$\Omega_B(t)$ is bounded. Conversely, if \eqref{e:hanoi2014-05-22} 
does not hold, then there exists $j\in\{1,\ldots,d\}$ such that 
$\menge{mu^j}{m\in\NN}\subset\Omega_B(t)$, which shows that
$\Omega_B(t)$ is unbounded.
\end{proof}

\begin{theorem} 
\label{theorem[compactness(2)]} 
Suppose that $P(D)$ is non-degenerate. 
Then $\UP$ is a compact subset of $L_2(\TTd)$ if and only if 
\eqref{e:hanoi2014-05-22} is satisfied and $0\in A$.
\end{theorem}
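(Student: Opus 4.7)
The plan is to deduce the equivalence from Lemma~\ref{l:7} (compactness characterization), Lemma~\ref{l:non-deg} (non-degeneracy estimate), and Lemma~\ref{l:6} (finiteness criterion for $\Omega_B(t)$), with non-degeneracy playing a dual role: it packages $|P|$ on $\RRd$ in terms of the single quantity $\max_{\alpha\in\vartheta(A)}|x^\alpha|$ and, crucially, forces the vertex exponents to be even (Remark~\ref{r:hanoi2014-04-23}), which is what will allow the estimates to descend from $\RRd$ to $\ZZd$. I shall use without comment that $A\cap\mathcal{R}^j\neq\varnothing$ if and only if $\vartheta(A)\cap\mathcal{R}^j\neq\varnothing$, since the largest pure $j$-power appearing in $A$ is necessarily a vertex of $\conv{\Delta(A)}$.

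For sufficiency, suppose $0\in A$ and \eqref{e:hanoi2014-05-22} holds. Lemma~\ref{l:non-deg} produces $\gamma\in\RPP$ such that $|P(x)|\geq\gamma\max_{\alpha\in\vartheta(A)}|x^\alpha|$ on $\RRd$. Because $0\in A$ forces $0\in\vartheta(A)$ (by the remark following Assumption~\ref{a:standing}), the maximum is at least $|x^0|=1$, so $|P(x)|\geq\gamma$ everywhere and $\tau\geq\gamma>0$, which is Lemma~\ref{l:7}\ref{l:1ii}. To verify Lemma~\ref{l:7}\ref{l:1i}, fix $t>\tau$ and $k\in K(t)$; by Remark~\ref{r:hanoi2014-04-23} every $\alpha\in\vartheta(A)$ has even components, so $k^\alpha=|k|^\alpha$ with $|k|=(|k_1|,\ldots,|k_d|)\in\NNd$, and the non-degeneracy estimate then forces $|k|\in\Omega_{\vartheta(A)}(t/\gamma)$. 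Lemma~\ref{l:6}, applied with $B=\vartheta(A)$ and using \eqref{e:hanoi2014-05-22}, makes this set finite, and since each of its points lifts to at most $2^d$ elements of $\ZZd$, $K(t)$ is finite.

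For necessity, assume $\UP$ is compact. Lemma~\ref{l:7}\ref{l:1ii} gives $\tau>0$, hence $P(0)\neq 0$; since $0^0=1$ and $0^\alpha=0$ for $\alpha\neq 0$, one has $P(0)=c_0$ if $0\in A$ and $P(0)=0$ otherwise, whence $0\in A$. Now suppose for contradiction that \eqref{e:hanoi2014-05-22} fails at some coordinate~$j$; then every $\alpha\in A\setminus\{0\}$ has a nonzero component at a position distinct from $j$, and evaluating $P$ at $k=mu^j$ annihilates every such monomial, leaving $P(mu^j)=c_0$ for all $m\in\NN^*$. Consequently $\{mu^j:m\in\NN^*\}\subseteq K(t)$ for every $t>|c_0|\geq\tau$, contradicting Lemma~\ref{l:7}\ref{l:1i}. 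The only delicate point throughout is the transition from the polynomial inequality on $\RRd$ to the combinatorial statement on $\NNd$, which rests squarely on the evenness of vertex components; without non-degeneracy, $k^\alpha$ could change sign as $k$ ranges over $\ZZd$ and the clean identification with $|k|^\alpha$ would break down.
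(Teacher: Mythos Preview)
Your proof is correct and follows the same overall scaffold as the paper's: reduce compactness to the two conditions of Lemma~\ref{l:7}, then control $K(t)$ via $\Omega_{\vartheta(A)}(t)$ using the non-degeneracy bound and Lemma~\ref{l:6}. For sufficiency the two arguments coincide, and your explicit invocation of the evenness of vertex exponents (Remark~\ref{r:hanoi2014-04-23}) to pass from $\ZZd$ to $\NNd$ is a point the paper glosses over.

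Where you depart is in the necessity direction. The paper first proves the \emph{upper} bound $|P(k)|\leq\gamma_1\max_{\alpha\in\vartheta(A)}|k^\alpha|$ via Carath\'eodory's theorem, obtaining a full two-sided estimate, and then reads off both directions of the equivalence simultaneously. You instead argue necessity by direct evaluation: $\tau>0$ forces $P(0)=c_0\neq 0$, hence $0\in A$; and if \eqref{e:hanoi2014-05-22} fails at $j$, then $P(mu^j)=c_0$ for every $m$, giving an infinite $K(t)$. This is more elementary and avoids Carath\'eodory entirely, and notably uses non-degeneracy nowhere in the necessity half. The trade-off is that the paper's route yields the two-sided estimate \eqref{e:2013-09-18a} as a byproduct, which becomes Corollary~\ref{c:12} and is essential for the main Theorem~\ref{t:1}; your argument proves the present theorem cleanly but would have to establish that estimate separately later.
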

\begin{proof}
Let us prove that there exists $\gamma_1\in\RPP$ such that
\begin{equation} 
\label{eq[|P(x)|<]}
\big(\forall k\in\ZZd\big)\quad |P(k)|\leq\gamma_1
\max_{\alpha\in\vartheta(A)}|k^\alpha|. 
\end{equation}
Since there exists $\gamma_1\in\RPP$ such that
\begin{equation} 
\big(\forall k\in\ZZd\big)\quad |P(k)|\leq\gamma_1
\max_{\alpha\in A}|k^\alpha|, 
\end{equation}
and since \eqref{eq[|P(x)|<]} trivially holds if there exists 
$j\in \{1,\ldots,d\}$ such that $k_j=0$, it is enough to show that
\begin{equation} 
\big(\forall \alpha \in A\big) \big(\forall k\in\NN^{*d}\big)\quad 
k^\alpha\leq\max_{\beta\in\vartheta(A)} k^\beta,
\end{equation}
and a fortiori that
\begin{equation} 
\big(\forall \alpha\in A\big)\big(\forall x\in\RRd_+\big)\quad 
\scal{\alpha}{x}\leq\max_{\beta\in\vartheta(A)}\scal{\beta}{x}.
\end{equation}
Indeed, since $\alpha \in \conv{\vartheta(A)}$, by
Carath\'eodory's theorem \cite[Theorem~17.1]{Rock70}, 
$\alpha$ is a convex combination of points 
$(\beta^j)_{1\leq j\leq d+1}$ in $\vartheta(B)$, say
\begin{equation}
\alpha =
\sum_{j=1}^{d+1}\lambda_j\beta^j,\quad
\text{where}\quad (\lambda_j)_{1\leq j\leq d+1}\in\RP^{d+1}
\quad\text{and}\quad\sum_{j=1}^{d+1} \lambda_j=1.
\end{equation}
Therefore
\begin{equation}
\big(\forall x\in\RRd_+\big)\quad\scal{\alpha}{x}=
\sum_{j=1}^{d+1} \lambda_j \scal{\beta_j}{x}
\leq \sum_{j=1}^{d+1}\lambda_j\max_{\beta\in\vartheta(A)} 
\scal{\beta}{x}
=\max_{\beta\in\vartheta(A)}\scal{\beta}{x}.
\end{equation}
Hence, Lemma~\ref{l:non-deg} asserts that there exists 
$\gamma_2\in\RPP$ such that
\begin{equation}
\label{e:2013-09-18a}
\big(\forall k\in\ZZd\big)\quad 
\gamma_2 \max_{\alpha\in\vartheta(A)}|k^\alpha|\leq
|P(k)|\leq \gamma_1\max_{\alpha\in\vartheta(A)}|k^\alpha|.
\end{equation}
Consequently, by Lemma~\ref{l:7}, 
$\UP$ is a compact set in $L_2(\TTd)$ if and only if, for every
$t\in\RP$, $\Omega_A(t)$ is finite and 
\begin{equation}
\inf_{k\in\ZZdp}\max_{\alpha\in A} k^\alpha>0.
\end{equation}
In view of Lemma~\ref{l:6}, the first condition is equivalent 
to \eqref{e:hanoi2014-05-22} and the second to $0\in A$.
\end{proof}

\section{Main result}
\label{Widths}

\subsection{Convex-analytical results}

Several important convex-analytical facts underly our analysis 
(see \cite{Livre1,Rock70} for background on convex analysis).
We start with the following corollary. 

\begin{corollary} 
\label{c:12} 
Suppose that $P(D)$ is non-degenerate. Then 
$\big(\forall k\in\ZZd\big)$
$|P(k)|\asymp\max_{\alpha\in\vartheta(A)}|k^\alpha|$.
\end{corollary}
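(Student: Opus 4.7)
The plan is to observe that this corollary is an immediate reformulation of a two-sided pointwise inequality that was essentially established inside the proof of Theorem~\ref{theorem[compactness(2)]}, namely \eqref{e:2013-09-18a}. Since the definition of the relation $\asymp$ only asks for positive constants $\gamma_1,\gamma_2$ such that
\begin{equation*}
\gamma_1\max_{\alpha\in\vartheta(A)}|k^\alpha|\leq|P(k)|\leq
\gamma_2\max_{\alpha\in\vartheta(A)}|k^\alpha|
\end{equation*}
holds uniformly in $k\in\ZZd$, nothing new is needed beyond recombining the upper and lower bounds already at hand.

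First I would recall that non-degenerateness of $P(D)$, by Lemma~\ref{l:non-deg}, furnishes a constant $\gamma\in\RPP$ with $|P(x)|\geq\gamma\max_{\alpha\in\vartheta(A)}|x^\alpha|$ for every $x\in\RRd$; specializing to integer points $x=k\in\ZZd$ yields the lower estimate. For the upper bound, I would invoke the Carath\'eodory-based argument already carried out in the proof of Theorem~\ref{theorem[compactness(2)]}: each $\alpha\in A$ lies in $\conv{\vartheta(A)}$, so for every $x\in\RR^d_+$ one has $\scal{\alpha}{x}\leq\max_{\beta\in\vartheta(A)}\scal{\beta}{x}$, whence $k^\alpha\leq\max_{\beta\in\vartheta(A)}k^\beta$ for $k\in\NN^{*d}$, and the extension to coordinate-plane points (where some $k_j$ vanish) is handled via the convention $0^0=1$ exactly as in the derivation of \eqref{eq[|P(x)|<]}. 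Combining this with the trivial bound $|P(k)|\leq\sum_{\alpha\in A}|c_\alpha||k^\alpha|\leq(\sum_{\alpha\in A}|c_\alpha|)\max_{\alpha\in A}|k^\alpha|$ produces the upper constant.

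There is essentially no obstacle: the whole content of the corollary is contained in \eqref{e:2013-09-18a}, and the proof is a two-line invocation. The only point requiring a moment of care is the behavior at $k\in\ZZd$ having some null components, since $\max_{\alpha\in\vartheta(A)}|k^\alpha|$ may then vanish; but in that case $|P(k)|$ vanishes as well (as one sees by noting, thanks to Remark~\ref{r:hanoi2014-04-23}, that the maximum is governed by the vertex monomials and the terms of $P$ killed by the zero components are precisely those indexed by non-vertex $\alpha$ with matching non-zero exponents), so both inequalities reduce to $0\leq 0$ and no positivity of the constants is jeopardized.
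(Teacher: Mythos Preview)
Your proposal is correct and follows precisely the paper's approach: the paper's own proof is the one-liner ``Combine \eqref{e:2013-09-18a} and Lemma~\ref{l:non-deg},'' which is exactly what you spell out. Your final paragraph is unnecessary (the two-sided bound \eqref{e:2013-09-18a} already covers all $k\in\ZZd$, including those on coordinate planes) and its parenthetical justification via Remark~\ref{r:hanoi2014-04-23} is not the relevant reason---that remark concerns the parity of vertex components, which plays no role here---though the claim that both sides vanish together is in fact true.
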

\begin{proof}
Combine \eqref{e:2013-09-18a} and Lemma~\ref{l:non-deg}.
\end{proof}
 
Next, we investigate the geometry of our problem from the 
view-point of convex duality. Let $C$ be a subset of $\RR^d$.
Recall that the \emph{polar set}\index{polar set} of $C$ is
\begin{equation}
\label{e:polarset}
C^\odot=\menge{x\in\RR^d}{(\forall\alpha\in C)\;\;
\scal{\alpha}{x}\leq 1},
\end{equation}
and the \emph{indicator function} of $C$ is 
\begin{equation}
\label{e:iota}
\iota_C\colon\RR^d\to\RX\colon x\mapsto 
\begin{cases}
0,&\text{if}\;\:x\in C;\\
\pinf,&\text{otherwise.}
\end{cases}
\end{equation}
Moreover, if $C$ is convex and $0\in C$, the 
\emph{Minkowski gauge} of $C$ is the lower semicontinuous convex
function 
\begin{equation}
m_C\colon\RR^d\to\RX\colon x\mapsto\text{\rm inf}\:
\menge{\xi\in\RPP}{x\in\xi C}.
\end{equation}
Finally, the domain of a function $\varphi\colon\RR^d\to\RX$ is
$\dom\varphi=\menge{x\in\RR^d}{\varphi(x)<\pinf}$.

\begin{lemma}
\label{l:hanoi2014-05-27}
Let $B$ be a nonempty finite subset of $\RRdp$ such that 
\begin{equation}
\label{e:hanoi2014-05-24}
0\in B\quad\text{and}\quad(\forall j\in\{1,\ldots,d\})\quad
B\cap\mathcal{R}^j\neq\varnothing.
\end{equation}
Set $\boldsymbol{1}=(1,\ldots,1)\in\RR^d$, 
let $\mu(B)$ be the optimal value of the problem 
\begin{equation} 
\label{e:hanoi-dual}
\maximize{x\in B^\odot}{\sum_{j=1}^d x_j}, 
\end{equation}
and set 
\begin{equation} 
\label{e:hanoi-primal}
\varrho(B)=\text{\rm max}\menge{\rho\in\RPP}
{\rho\boldsymbol{1}\in\conv{B}}.
\end{equation}
Then $\varrho(B)\in\RPP$ and $\mu(B)=1/\varrho(B)$.
\end{lemma}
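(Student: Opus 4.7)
The plan is to handle the two assertions separately: first, that $\varrho(B)$ is a well-defined positive real, and second, the duality $\mu(B)=1/\varrho(B)$, which I would establish via matching weak and strong duality inequalities.

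For the positivity and finiteness of $\varrho(B)$: since $B$ is finite, $\conv B$ is a compact convex subset of $\RRdp$, so $\{\rho\in\RP:\rho\boldsymbol{1}\in\conv B\}$ is a compact interval containing $0$, and therefore admits a finite maximum. To see this maximum is strictly positive, I would invoke \eqref{e:hanoi2014-05-24} to select $a_j\in\RPP$ with $a_j u^j\in B$ for each $j\in\{1,\ldots,d\}$, and exhibit a convex combination of $\{0,a_1u^1,\ldots,a_du^d\}\subset B$ equal to $\rho\boldsymbol{1}$ for some $\rho>0$ (taking weights proportional to $1/a_j$, together with the residual weight placed on $0$). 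This gives $\varrho(B)>0$.

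The upper bound $\mu(B)\le 1/\varrho(B)$ is a weak-duality computation: given any $x\in B^\odot$ and any convex representation $\varrho(B)\boldsymbol{1}=\sum_{\alpha\in B}\lambda_\alpha\alpha$, the constraints $\scal{\alpha}{x}\le 1$ average to $\varrho(B)\scal{\boldsymbol{1}}{x}\le 1$. For the reverse inequality I would produce an explicit maximizer by separation. By the maximality clause defining $\varrho(B)$, the point $\varrho(B)\boldsymbol{1}$ lies on the boundary of $\conv B$ and every $(\varrho(B)+\varepsilon)\boldsymbol{1}$ with $\varepsilon\in\RPP$ lies outside; applying the supporting hyperplane theorem to the compact convex set $\conv B$ at $\varrho(B)\boldsymbol{1}$ yields a nonzero $x^*\in\RR^d$ such that $\scal{\alpha}{x^*}\le\varrho(B)\scal{\boldsymbol{1}}{x^*}$ for every $\alpha\in\conv B$. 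The exclusion of $(\varrho(B)+\varepsilon)\boldsymbol{1}$ forces $\scal{\boldsymbol{1}}{x^*}>0$, so $c:=\varrho(B)\scal{\boldsymbol{1}}{x^*}>0$ and $x^*/c\in B^\odot$ with $\scal{\boldsymbol{1}}{x^*/c}=1/\varrho(B)$, yielding $\mu(B)\ge 1/\varrho(B)$.

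The main obstacle is ensuring that the supporting functional $x^*$ indeed satisfies $\scal{\boldsymbol{1}}{x^*}>0$, which is what allows the rescaling into $B^\odot$; the strict maximality of $\varrho(B)$ delivers this. A more streamlined variant, consonant with the paper's convex-duality vocabulary, would observe that $B^\odot=(\conv B)^\odot$ and that, since $0\in\conv B$, the Fenchel conjugate of the Minkowski gauge $m_{\conv B}$ is the indicator $\iota_{(\conv B)^\odot}$; the Fenchel--Moreau theorem then identifies $\mu(B)=\sup_{x\in\RR^d}(\scal{\boldsymbol{1}}{x}-\iota_{B^\odot}(x))$ with $m_{\conv B}(\boldsymbol{1})=\inf\{\xi\in\RPP:\boldsymbol{1}\in\xi\conv B\}$, which is exactly $1/\varrho(B)$.
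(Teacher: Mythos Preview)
Your proposal is correct and, in its final paragraph, essentially reproduces the paper's own argument: the paper sets $\varphi=m_{\conv{B}}$, $\psi=\iota_{\{\boldsymbol{1}\}}$, computes $\varphi^*=\iota_{B^\odot}$ and $\psi^*=\scal{\cdot}{\boldsymbol{1}}$, and applies the Fenchel duality formula to obtain $\mu(B)=m_{\conv{B}}(\boldsymbol{1})=1/\varrho(B)$. Your main body, however, takes a more elementary route---a direct weak-duality estimate plus a supporting-hyperplane construction---which avoids any appeal to a packaged Fenchel duality theorem. This is a genuine alternative: it is more self-contained, at the cost of handling the two inequalities separately rather than getting both at once from a single duality identity.

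One point deserves tightening. In the supporting-hyperplane step you assert that ``the exclusion of $(\varrho(B)+\varepsilon)\boldsymbol{1}$ forces $\scal{\boldsymbol{1}}{x^*}>0$.'' That is not the right reason: a supporting hyperplane at $\varrho(B)\boldsymbol{1}$ need not separate exterior points, so exclusion alone does not rule out $\scal{\boldsymbol{1}}{x^*}=0$. The correct argument uses the structural hypotheses on $B$. Since $0\in\conv{B}$ and $\varrho(B)\boldsymbol{1}$ maximizes $\scal{\cdot}{x^*}$ over $\conv{B}$, one has $\varrho(B)\scal{\boldsymbol{1}}{x^*}\geq 0$, hence $\scal{\boldsymbol{1}}{x^*}\geq 0$. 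If equality held, then $\scal{\alpha}{x^*}\leq 0$ for every $\alpha\in B$; applying this to each $a_ju^j\in B$ gives $x^*_j\leq 0$ for all $j$, and together with $\sum_j x^*_j=0$ this forces $x^*=0$, contradicting the nontriviality of the supporting functional. With this correction your elementary argument goes through cleanly.
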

\begin{proof}
It follows from \eqref{e:hanoi2014-05-24} that 
\begin{equation}
\label{e:2014-05-31}
\RR_+^d\cap B^\odot=\RR_+^d\cap\bigcap_{\alpha\in B}
\menge{x\in\RR^d}{\scal{x}{\alpha}\leq 1}
\end{equation}
is a nonempty compact set and hence \eqref{e:hanoi-dual} does 
have a solution. Now fix $j\in\{1,\ldots,d\}$. Then
$(\exi a_j\in\RPP)$ $a_ju^j\in B$. Hence $x^j=(1/a_j)u^j\in B^\odot$ 
and therefore $\mu(B)=\max_{x\in B^\odot}\scal{x}{\boldsymbol{1}}
\geq\scal{x^j}{\boldsymbol{1}}=1/a_j>0$. 
Altogether $\mu(B)\in\RPP$. Likewise,
\eqref{e:hanoi2014-05-24} implies that $\varrho(B)\in\RPP$. 
Let us set $\varphi=m_{\conv{B}}$ and 
$\psi=\iota_{\{\boldsymbol{1}\}}$. 
Then it follows from \eqref{e:hanoi2014-05-24} that
$\dom \varphi=\dom m_{\conv{B}}=\RP^d$.
Furthermore, the conjugate of $\varphi$ is 
$\varphi^*=\iota_{(\conv{B})^\odot}=\iota_{B^\odot}$ 
\cite[Propositions~14.12 and 7.14(vi)]{Livre1} and 
the conjugate of $\psi$ is 
$\psi^*=\scal{\cdot}{\boldsymbol{1}}$. Hence, since 
$\boldsymbol{1}\in\inte\,\dom\varphi=\RPP^d$, 
$\dom\psi\cap\inte\,\dom\varphi\neq\varnothing$ and the Fenchel 
duality formula \cite[Proposition~15.13]{Livre1} yields
\begin{align}
\label{e:hanoi2014-05-27u}
\mu(B)
&=\max_{x\in B^\odot}\sum_{j=1}^d{x_j}\nonumber\\
&=-\min_{x\in B^\odot}\scal{-x}{\boldsymbol{1}}\nonumber\\
&=-\min_{x\in\RR^d}\big(\iota_{B^\odot}(x)+\scal{-x}
{\boldsymbol{1}}\big)
\nonumber\\
&=-\min_{x\in\RR^d}\big(\varphi^*(x)+\psi^*(-x)\big)\nonumber\\
&=\inf_{\alpha\in\RR^d}\big(\varphi(\alpha)+
\psi(\alpha)\big)\nonumber\\
&=\inf_{\alpha\in\RR^d}\big(m_{\conv{B}}(\alpha)+
\iota_{\{\boldsymbol{1}\}}(\alpha)\big)\nonumber\\
&=m_{\conv{B}}(\boldsymbol{1})\nonumber\\
&=\inf\:\menge{\xi\in\RPP}{\boldsymbol{1}\in\xi\conv{B}}
\nonumber\\
&=\Frac{1}{\sup\:
\menge{\rho\in\RPP}{\rho\boldsymbol{1}\in\conv{B}}}.
\end{align}
We conclude that $\mu(B)=1/\varrho(B)$.
\end{proof}

To illustrate the duality principles underlying
Lemma~\ref{l:hanoi2014-05-27}, we consider two examples.

\begin{example}
\label{ex:hanoi2014-05-24a}
We consider the case when $d=2$ and  
$B=\{(6,0),(0,6),(4,4),(0,0)\}$ (see Figure~\ref{fig:2014-05-24a}). 
Then \eqref{e:hanoi2014-05-24} is satisfied, $\mu(B)=1/4$, and 
$\varrho(B)=4$. 
The set of solutions to \eqref{e:hanoi-dual}
is the set $S$ represented by the solid red segment:
$S=\menge{(x_1,x_2)\in[1/12,1/6]^2}{x_1+x_2=1/4}$. 

\begin{figure}
\centering
\scalebox{0.80} 
{
\begin{pspicture}(-2,-2)(9,9) 
\pspolygon[fillstyle=solid,fillcolor=lightgray!30]%
(0.0,0.0)(6.0,0.0)(4.0,4.0)(0.0,6.0)
\psline[linewidth=0.05cm,arrowsize=0.3cm,linestyle=solid]{->}%
(-1.4,0)(8,0)
\psline[linewidth=0.05cm,arrowsize=0.3cm,linestyle=solid]{->}%
(0,-1.4)(0,8)
\psline[linewidth=0.08cm,linestyle=solid]{-}(0.0,0.0)(0.0,6.0)
\psline[linewidth=0.08cm,linestyle=solid]{-}(0.0,0.0)(6.0,0.0)
\psline[linewidth=0.08cm,linestyle=solid]{-}(0.0,6.0)(4.0,4.0)
\psline[linewidth=0.08cm,linestyle=solid]{-}(4.0,4.0)(6.0,0.0)
\psline[linecolor=blue,linewidth=0.08cm,linestyle=dashed,%
arrowsize=0.5]{->}(0.0,0.0)(3.94,3.94)
\psdot[linecolor=black,linewidth=0.09cm](0.0,0.0)
\psdot[linecolor=black,linewidth=0.09cm](0.0,6.0)
\psdot[linecolor=black,linewidth=0.09cm](6.0,0.0)
\psdot[linecolor=blue,linewidth=0.09cm](4.0,4.0)
\rput(-0.5,6.0){\Large${6}$}
\rput(-0.5,4.0){\Large${4}$}\rput(0.0,4.0){\large${-}$}
\rput(4.0,-0.5){\Large${4}$}\rput(4.0,0.0){\large${|}$}
\rput(6.0,-0.5){\Large${6}$}
\rput(-0.4,-0.4){\Large${0}$}
\rput(4.8,4.5){\Large$\varrho(B)\boldsymbol{1}$}
\rput(3.4,1.1){\Large$\conv{B}$}
\end{pspicture} 
}

\scalebox{2.0} 
{
\begin{pspicture}(-1.4,-1.4)(4,4) 
\pspolygon[linewidth=0.0mm,fillstyle=solid,fillcolor=lightgray!30]%
(-0.5,1.667)(0.8333,1.667)(1.667,0.8333)(1.667,-0.55)(-0.5,-0.55)
\psline[linewidth=0.2mm,arrowsize=1.2mm,linestyle=solid]{->}%
(-0.5,0)(3.3,0)
\psline[linewidth=0.2mm,arrowsize=1.2mm,linestyle=solid]{->}%
(0,-0.55)(0,3.3)
\psline[linewidth=0.2mm,linestyle=dashed]{-}%
(-0.5,1.667)(3.0,1.667)
\psline[linewidth=0.2mm,linestyle=dashed]{-}%
(1.667,-0.5)(1.667,3.0)
\psline[linewidth=0.2mm,linestyle=dashed]{-}(-0.5,3.0)(3.0,-0.5)
\psline[linewidth=0.3mm,linestyle=solid]{-}%
(-0.5,1.667)(0.8333,1.667)
\psline[linewidth=0.3mm,linestyle=solid]{-}%
(1.667,-0.55)(1.667,0.8333)
\psline[linewidth=0.3mm,linestyle=solid]{-}%
(0.8333,1.667)(1.667,0.8333)
\psline[linecolor=red,linewidth=0.4mm,linestyle=dotted,dotsep=2pt]%
{-}(-0.5,3.0)(3.0,-0.5)
\psline[linecolor=red,linewidth=0.3mm,linestyle=solid]{-}%
(0.8333,1.667)(1.667,0.8333)
\rput(-0.30,0.8333){\tiny$\frac{1}{12}$}\rput(0,0.8333){\tiny$-$}
\rput(-0.40,2.5){\tiny${\mu(B)}$}\rput(0,2.5){\tiny$-$}
\rput(-0.30,1.87){\tiny$\frac{1}{6}$}
\rput(2.7,0.30){\tiny$\mu(B)$}\rput(2.5,0){\tiny$|$}
\rput(1.80,-0.35){\tiny$\frac{1}{6}$}\rput(0.8333,0){\tiny$|$}
\rput(0.8333,-0.35){\tiny$\frac{1}{12}$}\rput(0.8333,0){\tiny$|$}
\rput(0.8333,0.8333){\tiny$B^{\odot}$}
\end{pspicture} 
}
\caption{Graphical illustration of 
Example~\ref{ex:hanoi2014-05-24a}: In gray, the Newton polyhedron 
(top) and its polar (bottom). The dashed lines are the hyperplanes
delimiting the polar set $B^\odot$ and the dotted line represents 
the optimal level curve of the objective function 
$x\mapsto\scal{x}{\boldsymbol{1}}$ in \eqref{e:hanoi-dual}. The
solid red segment depicts the solution set of \eqref{e:hanoi-dual}.}
\label{fig:2014-05-24a}
\end{figure}
\end{example}

\begin{example}
\label{ex:hanoi2014-05-24b}
In this example we consider the case when 
$B=\{(0,6), (2,4), (4,0), (0,0)\}$.
Then \eqref{e:hanoi2014-05-24} is satisfied, $\mu(B)=3/8$, and
$\varrho(B)=8/3$. 
The set of solutions to \eqref{e:hanoi-dual} reduces to the 
singleton $S=\{(1/4,1/8)\}$.

\begin{figure}
\centering
\scalebox{0.80} 
{
\begin{pspicture}(-2,-2)(7,7) 
\pspolygon[fillstyle=solid,fillcolor=lightgray!30]%
(0.0,0.0)(0.0,6.0)(2.0,4.0)(4.0,0.0)
\psline[linewidth=0.05cm,arrowsize=0.3cm,linestyle=solid]{->}%
(-1.4,0)(6,0)
\psline[linewidth=0.05cm,arrowsize=0.3cm,linestyle=solid]{->}%
(0,-1.4)(0,8)
\psline[linewidth=0.08cm,linestyle=solid]{-}(0.0,0.0)(0.0,6.0)
\psline[linewidth=0.08cm,linestyle=solid]{-}(0.0,6.0)(2.0,4.0)
\psline[linecolor=blue,linewidth=0.08cm,linestyle=solid]{-}%
(2.0,4.0)(4.0,0.0)
\psline[linewidth=0.08cm,linestyle=solid]{-}(4.0,0.0)(0.0,0.0)
\psdot[linecolor=black,linewidth=0.09cm](0.0,0.0)
\psdot[linecolor=black,linewidth=0.09cm](0.0,6.0)
\psdot[linecolor=black,linewidth=0.09cm](2.0,4.0)
\psdot[linecolor=black,linewidth=0.09cm](4.0,0.0)
\psline[linecolor=blue,linewidth=0.08cm,linestyle=dashed,%
arrowsize=0.5]{->}(0.0,0.0)(2.667,2.667)
\rput(-0.5,6.0){\Large${6}$}
\rput(-0.5,4.0){\Large${4}$}\rput(0.0,4.0){\large${-}$}
\rput(2.0,-0.5){\Large${2}$}\rput(2.0,0.0){\large${|}$}
\rput(4.0,-0.5){\Large${4}$}
\rput(-0.4,-0.4){\Large${0}$}
\rput(1.2,3.0){\Large$\conv{B}$}
\rput(3.4,2.94){\Large$\varrho(B)\boldsymbol{1}$}
\end{pspicture} 
}

\scalebox{1.7} 
{
\begin{pspicture}(-1,-1)(7,5) 
\pspolygon[linewidth=0.0mm,fillstyle=solid,fillcolor=lightgray!30]%
(-0.5,1.667)(1.667,1.667)(2.5,1.25)(2.5,-0.65)(-0.5,-0.65)
\psline[linewidth=0.2mm,arrowsize=1.2mm,linestyle=solid]{->}%
(-0.5,0)(6.5,0)
\psline[linewidth=0.2mm,arrowsize=1.2mm,linestyle=solid]{->}%
(0,-0.65)(0,3.3)
\psline[linewidth=0.2mm,linestyle=dashed]{-}%
(-0.5,1.667)(6.1,1.667)
\psline[linewidth=0.2mm,linestyle=dashed]{-}%
(2.5,-0.55)(2.5,3.0)
\psline[linewidth=0.3mm,linestyle=solid]{-}%
(-0.5,1.667)(1.667,1.667)
\psline[linecolor=red,linewidth=0.4mm,linestyle=dotted,dotsep=2pt]%
(0.625,3.125)(4.375,-0.625)
\psline[linewidth=0.3mm,linestyle=solid]{-}%
(2.5,-0.65)(2.5,1.25)
\psline[linewidth=0.3mm,linestyle=solid]{-}%
(1.667,1.667)(2.5,1.25)
\psline[linewidth=0.2mm,linestyle=dashed]{-}(-0.5,2.75)(6.1,-0.55)
\rput(-0.30,1.25){\small$\frac{1}{8}$}\rput(0,1.25)%
{\small$-$}
\rput(0.20,2.7){\small${\frac{1}{4}}$}\rput(0,2.5)%
{\small$-$}
\rput(-0.30,1.97){\small$\frac{1}{6}$}
\rput(2.65,-0.30){\small$\frac{1}{4}$}\rput(2.5,0)%
{\tiny$|$}
\rput(3.65,-0.40){\scriptsize$\mu(B)$}\rput(3.75,0)%
{\tiny$|$}
\rput(5.0,-0.40){\small$\frac{1}{2}$}\rput(5.0,0)%
{\tiny$|$}
\rput(1.667,-0.37){\small$\frac{1}{6}$}\rput(1.667,0){\tiny$|$}
\rput(1.0,0.8333){\scriptsize$B^{\odot}$}
\psdot[linecolor=red,linewidth=0.03cm](2.5,1.25)
\end{pspicture} 
}

\caption{Graphical illustration of 
Example~\ref{ex:hanoi2014-05-24b}: In gray, the Newton polyhedron 
(top) and its polar (bottom). The dashed lines are the hyperplanes
delimiting the polar set $B^\odot$ and the dotted line represents 
the optimal level curve of the objective function 
$x\mapsto\scal{x}{\boldsymbol{1}}$ in \eqref{e:hanoi-dual}. The red
dot locates the unique solution to \eqref{e:hanoi-dual}.}
\label{fig:2014-05-24b}
\end{figure}
\end{example}

\begin{lemma} 
\label{lemma[asymp]} 
Let $B$ be a nonempty finite subset of $\RRdp$ and suppose that
\begin{equation}
\label{e:uj}
(\forall j\in\{1,\ldots,d\})\quad
B\cap\mathcal{R}^j\neq\varnothing.
\end{equation}
Let $\mu(B)$ be the optimal value of the problem 
\begin{equation} 
\label{e:hanoi-dual2}
\maximize{x\in B^\odot}{\sum_{j=1}^d x_j}, 
\end{equation}
and let $\nu(B)$ be the dimension of its set of solutions.
Then $\mu(B)\in\RPP$ and 
\begin{equation}
(\forall t\in [2,\pinf[)\quad
\card\Omega_B(t)\asymp t^{\mu(B)}\big(\log t\big)^{\nu(B)}.
\end{equation}
\end{lemma}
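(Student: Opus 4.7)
The plan is to reduce the lattice-point count to a volume computation and then extract the asymptotic via a Laplace-type estimate for the linear phase $\scal{\cdot}{\boldsymbol{1}}$ on the polytope $B^\odot \cap \RRdp$. Positivity of $\mu(B)$ follows exactly as in the first paragraph of the proof of Lemma~\ref{l:hanoi2014-05-27}: condition~\eqref{e:uj} provides, for each $j \in \{1,\ldots,d\}$, a vector $a_j u^j \in B$ with $a_j \in \RPP$, whence $(1/a_j)u^j \in B^\odot \cap \RRdp$ and $\mu(B) \ge 1/a_j > 0$; compactness of $B^\odot \cap \RRdp$ (again a consequence of \eqref{e:uj}) ensures that the maximum in \eqref{e:hanoi-dual2} is attained on a nonempty face $S$ of $B^\odot \cap \RRdp$ of dimension $\nu(B)$.

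To pass from counting to volume I exploit the coordinatewise monotonicity of $x \mapsto x^\alpha$ on $[1,\infty)^d$. If $k \in \Omega_B(t) \cap (\NN^*)^d$, then for every $x \in k+[0,1)^d$ and every $\alpha \in B$ one has $x^\alpha \leq 2^{|\alpha|} k^\alpha \leq 2^{|\alpha|} t$, so the disjoint unit cubes based at such $k$ lie in $\{x \in [1,\infty)^d : x^\alpha \leq C_B t\}$ with $C_B = 2^{\max_{\alpha \in B}|\alpha|}$; conversely every $x \in [2,\infty)^d$ satisfying $x^\alpha \leq t$ for all $\alpha \in B$ yields $\lfloor x \rfloor \in \Omega_B(t) \cap (\NN^*)^d$. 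Hence
\[
\card\bigl(\Omega_B(t) \cap (\NN^*)^d\bigr) \asymp \mathrm{vol}\bigl\{x \in [1,\infty)^d : \max_{\alpha \in B} x^\alpha \leq t\bigr\}.
\]
Integer points of $\Omega_B(t)$ with at least one vanishing coordinate reduce, via restriction of $B$ to the coordinates of the nonzero $k_j$, to analogous counts in dimension strictly less than $d$; an induction on $d$ shows that these contributions are of strictly smaller order.

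Applying the substitution $x_j = t^{y_j}$, under which $dx = (\log t)^d\, t^{\scal{y}{\boldsymbol{1}}} dy$ and $x^\alpha \leq t \Leftrightarrow \scal{\alpha}{y} \leq 1$, the above volume becomes
\[
(\log t)^d \int_{K} t^{\scal{y}{\boldsymbol{1}}} dy, \qquad K = B^\odot \cap \RRdp.
\]
With $L = \log t$ this is an integral $\int_K e^{L\phi(y)} dy$ with linear phase $\phi = \scal{\cdot}{\boldsymbol{1}}$ on a polytope. In a coordinate system $(u,s)$ adapted to $S$, with $u \in S$ and $s \in \RR_+^{d-\nu(B)}$ transverse to $S$ in $K$, the phase reads $\phi = \mu(B) - \scal{c}{s}$ for some $c \in \RPP^{d-\nu(B)}$ (a direction in the relative interior of the normal cone of $K$ at $S$). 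Integrating the transverse variables gives $(\log t)^{-(d-\nu(B))}$, and integrating over $S$ produces a positive constant proportional to the $\nu(B)$-dimensional measure of $S$, so $\int_K t^{\scal{y}{\boldsymbol{1}}} dy \asymp t^{\mu(B)}(\log t)^{-(d-\nu(B))}$, which combined with the $(\log t)^d$ prefactor delivers the desired asymptotic $t^{\mu(B)}(\log t)^{\nu(B)}$.

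The main obstacle is the last Laplace-type estimate, because $K$ is not smooth at $S$ and the phase is merely linear, so the classical stationary-phase machinery does not apply directly. The lower bound is straightforward by restricting the integral to a small tubular neighborhood of the relative interior of $S$, where $\phi$ decays linearly in the transverse directions. For the upper bound one triangulates $K$ into finitely many simplices, each either sharing $S$ as a face or lying at strictly positive distance from $S$; simplices of the second type contribute a strictly smaller power of $t$ because $\phi \le \mu(B) - \delta$ on them for some $\delta > 0$, while on each simplex of the first type the integral is computed directly by iterated one-dimensional integrations of $t^{-c_i s_i}$ and bounded by $c\, t^{\mu(B)}(\log t)^{-(d-\nu(B))}$, the constant depending only on $B$ since the triangulation is finite.
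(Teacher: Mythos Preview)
Your overall architecture---reduce the lattice count to the volume of $\Lambda_B(t)=\{x\in\RRdp:\max_{\alpha\in B}x^\alpha\le t\}$ and then compute that volume by the substitution $x_j=t^{y_j}$ followed by a Laplace-type estimate on the polytope $B^\odot\cap\RRdp$---is exactly the structure the paper uses, except that the paper delegates both steps to \cite{DD84} (Theorems~1 and~2 there) rather than proving them. Your treatment of the volume integral is correct: the change of variables and the polytope-Laplace asymptotic $\int_K e^{L\phi}\asymp e^{L\mu(B)}L^{-(d-\nu(B))}$ are standard and your triangulation sketch is the right way to justify them.

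There is, however, a genuine gap in your lattice-to-volume reduction. The assertion that lattice points with a vanishing coordinate contribute a \emph{strictly smaller} order is false. Take $d=2$ and $B=\{(1,0),(0,1),(1,2)\}$: then $K=B^\odot\cap\RR_+^2$ is the triangle with vertices $(0,0),(1,0),(0,1/2)$, the maximum of $y_1+y_2$ is attained uniquely at $(1,0)$, so $\mu(B)=1$ and $\nu(B)=0$. For the face $k_2=0$ the restricted set is $B'=\{(1,0)\}$, giving the one-dimensional problem with $\mu(B')=1$ and $\nu(B')=0$; hence both the interior count and the boundary count are $\asymp t$, of the \emph{same} order. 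Your induction therefore does not yield the claimed strict inequality. What you actually need for the upper bound on $\card\Omega_B(t)$ is only that each coordinate-face contribution is $\lesssim t^{\mu(B)}(\log t)^{\nu(B)}$, i.e.\ that $\mu(B_j')\le\mu(B)$ for every $j$ (and, in case of equality, $\nu(B_j')\le\nu(B)$). This is true, but it requires an argument you do not give; one clean route is to avoid the interior/boundary split altogether and compare $\card\Omega_B(t)$ directly to $\text{vol}\,\Lambda_B(t)$ and $\text{vol}\,\Lambda_B(Ct)$ using monotonicity in both directions, as in the proof of \cite[Theorem~2]{DD84}.
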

\begin{proof} 
The fact that $\mu(B)\in\RPP$ was proved as in 
Lemma~\ref{l:hanoi2014-05-27}. Now fix $t\in [2,\pinf[$ and set
$\Lambda_B(t)=\menge{x\in\RRdp}{\text{\rm max}_{\alpha\in B}\,
x^\alpha\leq t}$.
Then, as in the proof of Lemma~\ref{l:6}, one
can see that $\Lambda_B(t)$ is a bounded subset of $\RRdp$. 
If we denote by $\text{vol}\,\Lambda_B(t)$
the volume of $\Lambda_B(t)$, then it follows from 
\cite[Theorem~1]{DD84} that
\begin{equation}
\text{vol}\,\Lambda_B(t)\asymp t^{\mu(B)}(\log t)^{\nu(B)}.
\end{equation}
Furthermore, proceeding as in the proof of \cite[Theorem~2]{DD84}, 
one shows that
\begin{equation}
\card\Omega_B(t)\asymp\text{vol}\,\Lambda_B(t).
\end{equation}
These asymptotic relations prove the claim.
\end{proof}

\subsection{Main result: asymptotic order of Kolmogorov $n$-width}

Our main result can now be stated and proved.

\begin{theorem} 
\label{t:1} 
Suppose that $P(D)$ is non-degenerate and that 
\begin{equation}
\label{e:hanoi2014-05-26}
0\in A\quad\text{and}\quad(\forall j\in\{1,\ldots,d\})\quad
A\cap\mathcal{R}^j\neq\varnothing.
\end{equation}
Let $\mu$ be the optimal value of the problem 
\begin{equation} 
\label{e:hanoi-dual3}
\maximize{x\in \vartheta(A)^\odot}{\sum_{j=1}^d x_j}, 
\end{equation}
let $\nu$ be the dimension of its set of solutions, and set
\begin{equation} 
\label{e:hanoi-primal2}
\varrho=\text{\rm max}\menge{\rho\in\RPP}
{\rho\boldsymbol{1}\in\conv{\vartheta(A)}}.
\end{equation}
Then $\mu=1/\varrho\in\RPP$ and, for $n$ sufficiently large, 
\begin{equation} 
\label{asymp[d_n]}
d_n\big(\UP,L_2(\TTd)\big)\asymp 
n^{-\varrho}\big(\log n\big)^{\nu\varrho}.
\end{equation}
Equivalently, using \eqref{e:2}, for $\varepsilon\in\RPP$ 
sufficiently small,
\begin{equation} 
\label{asymp[n_e]}
n_\varepsilon \big(\UP,L_2(\TTd)\big)\asymp 
\varepsilon^{-1/\varrho}|\log\varepsilon|^{\nu}.
\end{equation}
\end{theorem}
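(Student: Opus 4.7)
The plan is to reduce the asymptotic estimation of $d_n(\UP,L_2(\TTd))$ to a counting problem about the sets $K(t)$, using Corollary~\ref{c:12} to control $|P(k)|$ by $\max_{\alpha\in\vartheta(A)}|k^\alpha|$ and then Lemma~\ref{lemma[asymp]} (applied to $B=\vartheta(A)$) to obtain the asymptotics of their cardinality. Concretely, Corollary~\ref{c:12} gives constants $\gamma_2,\gamma_1\in\RPP$ such that for every $k\in\ZZd$, $\gamma_2\max_{\alpha\in\vartheta(A)}|k^\alpha|\le|P(k)|\le\gamma_1\max_{\alpha\in\vartheta(A)}|k^\alpha|$. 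Together with the symmetry $k\mapsto(\pm k_j)_{j}$, this yields
\begin{equation*}
\card K(t)\asymp\card\Omega_{\vartheta(A)}(t)\asymp t^{\mu}(\log t)^{\nu},
\end{equation*}
valid for $t\in[2,\pinf[$, where $\mu=1/\varrho\in\RPP$ by Lemma~\ref{l:hanoi2014-05-27}. Note that the hypothesis \eqref{e:hanoi2014-05-26} guarantees that Lemma~\ref{lemma[asymp]} applies and that $\vartheta(A)$ satisfies the appropriate nondegeneracy conditions (in particular, $0\in\vartheta(A)$).

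For the upper bound, I would invert the relation $\card K(t)\asymp t^\mu(\log t)^\nu$ to select, for each large $n$, a threshold $t_n\asymp n^\varrho(\log n)^{-\nu\varrho}$ such that $\card K(t_n)\le n$. Then $V(t_n)$ is a subspace of $L_2(\TTd)$ of dimension at most $n$, so it belongs to $\mathcal G_n$. Corollary~\ref{corollary[|f-S_t(f)|<]} yields
\begin{equation*}
d_n(\UP,L_2(\TTd))\le\sup_{f\in\UP}\inf_{g\in V(t_n),\,f-g\in L_2(\TTd)}\|f-g\|_2\le t_n^{-1}\asymp n^{-\varrho}(\log n)^{\nu\varrho}.
\end{equation*}

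For the lower bound, I would exploit the Bernstein-type inequality of Lemma~\ref{l:B}. Pick a threshold $s_n\asymp n^\varrho(\log n)^{-\nu\varrho}$ such that $N:=\card K(s_n)\ge n+1$. By Lemma~\ref{l:B}, the set
\begin{equation*}
\widetilde U=\Menge{f\in V(s_n)\cap L_2(\TTd)}{\|f\|_2\le 1/s_n}
\end{equation*}
is contained in $\UP$. Because $V(s_n)\cap L_2(\TTd)$ is a complex/real Euclidean space of real dimension at least $N\ge n+1$ and $\widetilde U$ is a Euclidean ball of radius $1/s_n$, the classical lower bound for Kolmogorov widths of Euclidean balls (cf.\ \cite{Pink85}) gives $d_n(\widetilde U,L_2(\TTd))\ge 1/s_n$. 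By monotonicity of the width in the first argument,
\begin{equation*}
d_n(\UP,L_2(\TTd))\ge d_n(\widetilde U,L_2(\TTd))\ge 1/s_n\asymp n^{-\varrho}(\log n)^{\nu\varrho}.
\end{equation*}
Matching the two bounds yields \eqref{asymp[d_n]}. The equivalence with \eqref{asymp[n_e]} is a purely algebraic inversion: setting $\varepsilon\asymp n^{-\varrho}(\log n)^{\nu\varrho}$ gives $\log n\asymp|\log\varepsilon|$ and hence $n\asymp\varepsilon^{-1/\varrho}|\log\varepsilon|^\nu$.

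The main technical hurdle is the transition from the volume/lattice-point asymptotics of Lemma~\ref{lemma[asymp]} (stated for $\Omega_{\vartheta(A)}$ and with a fixed set $\vartheta(A)$) to $\card K(t)$, which counts points in $\ZZd$ according to $|P|\le t$: the nondegeneracy of $P(D)$ is exactly what makes this transition clean via Corollary~\ref{c:12}. A secondary care point is ensuring that the chosen thresholds $t_n$ and $s_n$ lie in the valid range $[\max\{2,\tau\},\pinf[$, which is only needed for $n$ sufficiently large; this is why the conclusion is stated in that regime.
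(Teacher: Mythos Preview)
Your proposal is correct and follows essentially the same route as the paper: you use Corollary~\ref{c:12} and Lemma~\ref{lemma[asymp]} to get $\card K(t)\asymp t^{1/\varrho}(\log t)^\nu$, then obtain the upper bound from Corollary~\ref{corollary[|f-S_t(f)|<]} and the lower bound by embedding a ball of $V(t)$ into $\UP$ via Lemma~\ref{l:B} and invoking the classical width of a Euclidean ball (the paper cites Tikhomirov~\cite{Ti60} rather than~\cite{Pink85} for this last step, and presents the inversion $t\leftrightarrow n$ via a chain of inequalities rather than selecting $t_n,s_n$, but these are cosmetic differences).
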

\begin{proof}
Since $A$ satisfies \eqref{e:hanoi2014-05-26}, so does 
$\vartheta(A)$. Hence the fact that $\mu=1/\varrho\in\RPP$ 
follows from Lemma~\ref{l:hanoi2014-05-27}. We also note that the 
equivalence between \eqref{asymp[d_n]} and \eqref{asymp[n_e]}
follows from \eqref{e:1} and \eqref{e:2}. 
To show \eqref{asymp[d_n]}, set $\bar{t}=\max\{2,\tau\}$.
Then we derive from Corollary~\ref{c:12} that
\begin{equation}
(\forall t\in[\bar{t},+\infty[)\quad 
\card\Omega_{\vartheta(A)}(t)\asymp\card K(t).
\end{equation}
Applying Lemma~\ref{lemma[asymp]} to $\vartheta(A)$ yields
\begin{equation}
\label{e:hanoi}
(\forall t\in[\bar{t},\pinf[)\quad
\dim V(t)=\card K(t)\asymp 
t^{1/\varrho}\big(\log t\big)^\nu.
\end{equation}
Hence, for every $n\in\NN$ large enough, there exists $t\in\RPP$ 
depending on $n$ such that
\begin{multline} 
\label{ineq[<n<]}
\gamma_1\dim V(t)\leq\gamma_3t^{1/\varrho}\big(\log t\big)^\nu
\leq n<\gamma_3(t+1)^{1/\varrho}\big(\log (t+1)\big)^\nu\\
\leq\gamma_2\dim V(t+1)\leq\gamma_4
t^{1/\varrho}\big(\log t\big)^\nu,
\end{multline}
where $\gamma_1$, $\gamma_2$, $\gamma_3$, and $\gamma_4$ are 
strictly positive real parameters that are independent from $n$ 
and $t$. Therefore,
\begin{equation} 
\label{e:24}
n\asymp t^{1/\varrho}\big(\log t\big)^\nu. 
\end{equation}
or, equivalently,
\begin{equation} 
\label{ineq[t^{-1}]}
t^{-1} \asymp n^{-\varrho}\big(\log n\big)^{\nu\varrho}.
\end{equation}
It therefore follows from \eqref{e:1} and 
Corollary~\ref{corollary[|f-S_t(f)|<]} that
\begin{equation}
d_n\big(\UP,L_2(\TTd)\big)\leq
t^{-1} \asymp n^{-\varrho}\big(\log n\big)^{\nu\varrho},
\end{equation}
which establishes the upper bound in \eqref{asymp[d_n]}.
To establish the lower bound, let us recall from \cite{Ti60}
that, for every ${n+1}$-dimensional vector subspace $G_{n+1}$ of
$L_2(\TTd)$ and every $\eta \in\RPP$, we have 
\begin{equation}
\label{e:Ti60}
d_n\big(B_{n+1}(\eta),L_2(\TTd)\big)=\eta,\quad\text{where}\quad
B_{n+1}(\eta)=\menge{f\in G_{n+1}}{\|f\|_{L_2(\TTd)}\leq\eta}.
\end{equation}
Arguing as in \eqref{e:hanoi}--\eqref{ineq[t^{-1}]}, 
for $n\in\NN$ sufficiently large, there exists $t\in\RPP$ 
such that
\begin{equation} 
\label{ineq[dimV(t)>]}
\dim V(t)\geq\gamma_5 t^{1/\varrho}\big(\log t\big)^\nu > 
n\geq\gamma_6 t^{1/\varrho}\big(\log t\big)^\nu,
\end{equation}
where $\gamma_5\in\RPP$ and $\gamma_6\in\RPP$ are independent from 
$n$ and $t$. Now set
\begin{equation}
\label{e:stevens}
U(t)=\menge{f\in V(t)}{\|f\|_2\leq t^{-1}}.
\end{equation}
By Lemma~\ref{l:B}, $U(t)\subset\UP$. Consequently, it follows
from \eqref{e:Ti60}--\eqref{e:stevens} and \eqref{ineq[t^{-1}]} that
\begin{equation} 
d_n\big(\UP,L_2(\TTd)\big)\geq d_n\big(U(t),L_2(\TTd)\big) 
\geq t^{-1}\asymp n^{-\varrho}\big(\log n\big)^{\nu\varrho},
\end{equation}
which concludes the proof of \eqref{asymp[d_n]}. Next, let
us prove \eqref{asymp[n_e]}. Given a 
sufficiently small $\varepsilon\in\RPP$, take 
$t\in\RPP$ such that
$0 < t-1 <\varepsilon^{-1}\leq t$ and $\dim V(t)>1$. From
the above results, it can be seen that 
\begin{equation} 
\label{ineq[n_e]}
\dim V(t) - 1\leq n_\varepsilon\big(\UP,L_2(\TTd)\big)
\leq \dim V(t)
\end{equation}
which, together with \eqref{e:hanoi}, proves \eqref{asymp[n_e]}.
\end{proof}

\begin{remark}
We have actually proven a bit more than Theorem~\ref{t:1}. 
Namely, suppose that $P(D)$ satisfies the conditions of compactness
for $\UP$ stated in Lemma~\ref{l:7} and, for 
every $n\in\NN$, let $t(n)$ be the largest number such 
that $\card K(t(n))\leq n$.
Then, for $n$ sufficiently large, we have
\begin{equation} 
\label{e:7g7}
d_n\big(\UP,L_2(\TTd)\big) \asymp \frac{1}{t(n)}.
\end{equation}
\end{remark}

\section{Examples} 
\label{Norms equivalences}

We first establish norm equivalences and use them to provide 
examples of asymptotic orders of $d_n\big(\UP, L_2(\TTd)\big)$ 
for non-degenerate and degenerate differential operators.

\begin{theorem} 
\label{t:2} 
Suppose that $P(D)$ is non-degenerate and set
\begin{equation} 
Q\colon x\mapsto\sum_{\alpha\in\vartheta(A)} x^\alpha. 
\end{equation}
Then 
\begin{equation} 
\label{asymp[norm-equivalence]}
\big(\forall f\in W_2^{[P]}\big)\quad
\|f\|_{\WP}^2 \asymp \|f\|_{W^{[Q]}_2}^2
\asymp \sum_{\alpha\in\vartheta(A)}\|D^{\alpha}f\|_2^2
\asymp \max_{\alpha\in\vartheta(A)}\|D^{\alpha}f\|_2^2.
\end{equation}
Moreover, the seminorms in \eqref{asymp[norm-equivalence]} are 
norms if and only if $0\in A$.
\end{theorem}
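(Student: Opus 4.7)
The plan is to transport each of the four quantities in \eqref{asymp[norm-equivalence]} to the Fourier side, so that the whole chain of equivalences reduces to pointwise comparisons of four nonnegative weights on $\ZZd$. From \eqref{def[f^{(r)}]} together with the convention $0^0=1$, one checks that the Fourier coefficient of $D^\alpha f$ at $k$ equals $k^\alpha\hat{f}(k)$ for every $k\in\ZZd$ (the restriction to $\ZZ^d_0(\alpha)$ in \eqref{def[f^{(r)}]} is harmless, since $k^\alpha=0$ whenever $k\notin\ZZ^d_0(\alpha)$ and $\alpha\neq 0$). Parseval's identity then recasts the four seminorm-squares as $\sum_{k\in\ZZd}w_i(k)|\hat{f}(k)|^2$, or an obvious max-over-sum variant thereof, with weights
\begin{equation*}
w_1(k)=|P(k)|^2,\;\;w_2(k)=|Q(k)|^2,\;\;
w_3(k)=\sum_{\alpha\in\vartheta(A)}|k^\alpha|^2,\;\;
w_4(k)=\max_{\alpha\in\vartheta(A)}|k^\alpha|^2.
\end{equation*}

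The next step is to prove $w_1\asymp w_2\asymp w_3\asymp w_4$ pointwise on $\ZZd$. The relation $w_1\asymp w_4$ is precisely the squaring of Corollary~\ref{c:12}; the trivial inequality $w_4\leq w_3\leq\card\vartheta(A)\,w_4$ gives $w_3\asymp w_4$; and for $w_2\asymp w_4$ I would invoke Remark~\ref{r:hanoi2014-04-23}, which guarantees that every $\alpha\in\vartheta(A)$ has even components, so that $k^\alpha\geq 0$ on $\ZZd$ and $\max_\alpha k^\alpha\leq Q(k)\leq\card\vartheta(A)\,\max_\alpha k^\alpha$. Multiplying these pointwise bounds by $|\hat{f}(k)|^2$, summing over $k\in\ZZd$, and, for the last seminorm in \eqref{asymp[norm-equivalence]}, applying the elementary finite-family sandwich $\max_\alpha\|D^\alpha f\|_2^2\leq\sum_\alpha\|D^\alpha f\|_2^2\leq\card\vartheta(A)\,\max_\alpha\|D^\alpha f\|_2^2$, assembles the chain of $\asymp$'s.

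For the norm-versus-seminorm dichotomy, the Remark following Assumption~\ref{a:standing} yields $0\in A\Leftrightarrow 0\in\vartheta(A)$. If $0\in A$, the convention $0^0=1$ forces $w_4(k)\geq|k^0|^2=1$ for every $k\in\ZZd$, and the four-way equivalence just proved propagates this positivity to $w_1$, $w_2$, and $w_3$; consequently $\sum_kw_i(k)|\hat{f}(k)|^2=0$ forces $\hat{f}\equiv 0$, so each of the four seminorms is a norm. Conversely, if $0\notin A$ then every $\alpha\in A$ (hence every $\alpha\in\vartheta(A)$) carries at least one positive component, so $P(0)=Q(0)=0$ and $0^\alpha=0$ for each $\alpha\in\vartheta(A)$; hence the nonzero constant function $f\equiv 1$ lies in the kernel of each seminorm. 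The only real ``obstacle'' is the careful bookkeeping around the $0^0=1$ convention needed to express $\widehat{D^\alpha f}(k)=k^\alpha\hat{f}(k)$ uniformly in $k\in\ZZd$; once this is secured, the rest is a succession of routine pointwise bounds.
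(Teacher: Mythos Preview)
Your argument is correct and shares the paper's core strategy---Parseval plus Corollary~\ref{c:12}---but the execution differs in two places worth noting. For the $\max$-seminorm, the paper attempts to tie $\max_{\alpha\in\vartheta(A)}\|D^\alpha f\|_2^2$ directly to $\sum_{k\in\ZZd}\max_\alpha|k^\alpha|^2|\hat f(k)|^2$ via a partition of $\ZZd$ into the sets where each $|k^\alpha|$ is maximal, whereas you route through $\sum_{\alpha}\|D^\alpha f\|_2^2=\sum_k w_3(k)|\hat f(k)|^2$ and then invoke the trivial $w_3\asymp w_4$; your detour is cleaner and sidesteps the partition bookkeeping (indeed the paper's partition inequality, as written, actually points the wrong way and only yields an $\asymp$, not the claimed equality---so your route is also safer). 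For the $Q$-seminorm, the paper recycles its own equivalence with $Q$ in place of $P$ together with the identity $\vartheta(\vartheta(A))=\vartheta(A)$, while you instead appeal to Remark~\ref{r:hanoi2014-04-23} to see that every $k^\alpha$ with $\alpha\in\vartheta(A)$ is nonnegative on $\ZZd$, giving $Q(k)=\sum_{\alpha\in\vartheta(A)}k^\alpha\asymp\max_\alpha|k^\alpha|$ directly. Both variants are short; yours is a shade more self-contained, the paper's slightly more structural.
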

\begin{proof}
Let $f\in W_2^{[P]}$. It is clear that
\begin{equation}
\sum_{\alpha\in\vartheta(A)}\|D^{\alpha}f\|_2^2
\asymp\max_{\alpha\in\vartheta(A)}\|D^{\alpha}f\|_2^2.
\end{equation}
Parseval's identity and Corollary~\ref{c:12} yield
\begin{equation} 
\max_{\alpha\in\vartheta(A)}\|D^{\alpha}f\|_2^2
=\max_{\alpha\in\vartheta(A)}\sum_{k\in\ZZd}|k|^{2\alpha}
|\hat{f}(k)|^2\leq
\sum_{k\in\ZZd}\Big(\max_{\alpha\in\vartheta(A)}|k^\alpha|\Big)^2
|\hat{f}(k)|^2.
\end{equation}
Now let $(\ZZd(\alpha))_{\alpha\in\vartheta(A)}$ be a partition of 
$\ZZd$ such that
\begin{equation}
\max_{\beta\in\vartheta(A)}|k^\beta|=|k^\alpha|, 
\quad k\in\ZZd(\alpha).
\end{equation}
Then 
\begin{equation} 
\begin{aligned}
\max_{\alpha\in\vartheta(A)}\|D^{\alpha}f\|_2^2 
&=\max_{\alpha\in\vartheta(A)}\sum_{\alpha'\in\vartheta(A)}
\sum_{k\in\ZZd(\alpha')}|k^{2 \alpha}|\,|\hat{f}(k)|^2 \\[1.5ex]
&\geq\sum_{\alpha'\in\vartheta(A)}\sum_{k\in\ZZd(\alpha')}
|k^{2\alpha'}|\,|\hat{f}(k)|^2 \\[1.5ex]
&=\sum_{k\in\ZZd}\max_{\alpha\in\vartheta(A)}|k^\alpha|^2
\,|\hat{f}(k)|^2.
\end{aligned}
\end{equation}
Thus, 
\begin{equation} 
\max_{\alpha\in\vartheta(A)}\|D^{\alpha}f\|_2^2
=\sum_{k\in\ZZd}\max_{\alpha\in\vartheta(A)}|k^\alpha|^2 
\,|\hat{f}(k)|^2.
\end{equation}
Hence, appealing to Corollary~\ref{c:12} and \eqref{e:parseval}, 
we obtain 
\begin{equation} 
\max_{\alpha\in\vartheta(A)}\|D^{\alpha}f\|_2^2
\asymp 
\|f\|_{\WP}^2.
\end{equation}
The relation
\begin{equation} 
\max_{\alpha\in\vartheta(A)}\|D^{\alpha}f\|_2^2\asymp 
\|f\|_{W^{[Q]}_2}^2
\end{equation}
follows from the last seminorm equivalence and the identity
$\vartheta(\vartheta(A))=\vartheta(A)$. 
Therefore, we derive from \eqref{asymp[norm-equivalence]} that the
seminorms in \eqref{asymp[norm-equivalence]} are norms if and
only if $0\in A$.
\end{proof}

\subsection{Isotropic Sobolev classes}

Let $s\in\NN^*$.
The isotropic Sobolev space $H^s$ is the Hilbert
space of functions $f \in L_2(\TTd)$ equipped with the norm 
\begin{equation}
\|\cdot\|_{H^s}\colon f\mapsto
\sqrt{\|f\|_2^2+\sum_{|\alpha|=s} \|f^{(\alpha)}\|_2^2}.
\end{equation}
Consider
\begin{equation}
P\colon x\mapsto 1+\sum_{|\alpha|=s} x^\alpha =
\sum_{\alpha\in A} x^\alpha,
\end{equation}
where $A=\{0\}\cup\menge{\alpha\in\NNd}{|\alpha|=s}$. 
If $s$ is even, it follows directly from Lemma~\ref{l:non-deg} that 
the differential operator $P(D)$ is non-degenerate,
and consequently, by Theorem~\ref{t:2}, $\|\cdot\|_{H^s}$ is
equivalent to one of the norms appearing in 
\eqref{asymp[norm-equivalence]} with
$\vartheta(A)=\{0\}\cup\menge{su^j}{1\leq j\leq d}$ and
\begin{equation}
Q\colon x\mapsto 1+\sum_{j=1}^d x_j^s.
\end{equation}
Moreover, we have $\varrho(A)= s/d$ and $\nu(a)=0$. Therefore, 
we retrieve from Theorem~\ref{t:1} the well-known result
\begin{equation} 
d_n\big(U^s,L_2(\TTd)\big)\asymp n^{-s/d},
\end{equation}
where $U^s$ denotes the closed unit ball in $H^s$. 
This result is a direct 
generalization of the first result on $n$-widths established by 
Kolmogorov in \cite{Ko36}.

\subsection{Anisotropic Sobolev classes}

Given $\beta=(\beta_1,\ldots,\beta_d)\in\NN^{*d}$, the anisotropic 
Sobolev space $H^\beta$ is the Hilbert space of functions 
$f\in L_2$ equipped with the norm 
\begin{equation}
\|\cdot\|_{H^\beta}^2\colon f\mapsto
\sqrt{\|f\|_2^2+\sum_{j=1}^d\|f^{(\beta_ju^j)}\|_2^2}.
\end{equation}
Consider the polynomial
\begin{equation}
P\colon x\mapsto
1+\sum_{j=1}^d x_j^{\beta_j}=\sum_{\alpha \in A} x^\alpha,
\end{equation}
where $A=\{0\}\cup\menge{\beta_ju^j}{1\leq j\leq d}$. If the
coordinates of $\beta$ are even, the differential operator $P(D)$
is non-degenerate. Consequently, by Theorem~\ref{t:2}, 
$\|\cdot\|_{H^\beta}$ is equivalent to one of the norms in
\eqref{asymp[norm-equivalence]} with $\vartheta(A)= A$ and
\begin{equation}
Q=P.
\end{equation}
We have 
\begin{equation}
\varrho=\varrho(A)=\left(\sum_{j=1}^d 1/\beta_j \right)^{-1}
\end{equation} 
and $\nu(A)=0$, and therefore, from 
Theorem~\ref{t:1} we retrieve the known result \cite{Ho80}
\begin{equation} 
d_n\big(U^\beta,L_2(\TTd)\big) \asymp n^{-\varrho},
\end{equation}
where  $U^\varrho$ denotes the unit ball in in $H^\beta$.

\subsection{Classes of functions with a bounded mixed derivative} 

Let $\alpha=(\alpha_1,\ldots,\alpha_d)\in\NNd$ with 
$0<\alpha_1=\cdots=\alpha_{\nu+1}<\alpha_{\nu+2}= \cdots 
=\alpha_d$ for some $\nu\in\{0,\ldots,d-1\}$.
Given a set $e\subset \{1,\ldots,d\}$, let the vector
$\alpha(e)\in\ZZdp$ be defined by
$\alpha(e)_j=\alpha_j$ if $j\in e$, and $\alpha(e)_j=0$
otherwise (in particular, $\alpha(\varnothing)=0$ and
$\alpha(\{1,\ldots,d\})=\alpha$).  
The space $W^\alpha_2$ is the Hilbert space of functions
$f\in L_2$ equipped with the norm 
\begin{equation}
\|\cdot\|_{W^\alpha_2}\colon f\mapsto 
\sqrt{\sum_{e \subset\{1,\ldots,d\}}\|f^{(\alpha(e))}\|_2^2}.
\end{equation}
Consider
\begin{equation}
P\colon x\mapsto
\sum_{e \subset \{1,\ldots,d\}} x^{\alpha(e)}
=\sum_{\alpha \in A} x^\alpha,
\end{equation}
where $A= \menge{\alpha(e)}{e\subset \{1,\ldots,d\}}$.
If the coordinates of $\alpha$ are even, the differential operator
$P(D)$ is non-degenerate and hence, by Theorem~\ref{t:2}, 
$\|\cdot\|_{W^\alpha_2}$ is
equivalent to one of the norms in \eqref{asymp[norm-equivalence]}
with $\vartheta(A)= A$ and $Q=P$. We have $\varrho(A)=\alpha_1$ 
and $\nu(A)=\nu$, and therefore,  from 
Theorem~\ref{t:1} we recover the result proven in
\cite{Ba60a}, namely that for $n$ sufficiently large
\begin{equation} 
d_n\big(U^\alpha_2,L_2(\TTd)\big) \asymp
n^{-\alpha_1}\big(\log n\big)^{\nu \alpha_1},
\end{equation}
where $U^\alpha_2$ denotes the unit ball in $W^\alpha_2$.
In the particular case when $\alpha=\varrho{\bf 1}$, we have 
\begin{equation} 
d_n\big(U^{\varrho{\bf 1}}_2,L_2(\TTd)\big) \asymp n^{-\varrho} 
\big(\log n\big)^{(d-1)\varrho}.
\end{equation}

\subsection{Classes of functions with several bounded 
mixed derivatives} 
\label{W^A_2}
Suppose that \eqref{e:hanoi2014-05-26} is satisfied.
Let $W^A_2$ be the Hilbert space of functions 
$f\in L_2(\TTd)$ equipped with the norm 
\begin{equation}
\|\cdot\|_{W^A_2}\colon f\mapsto 
\sqrt{\sum_{\alpha \in A}\|f^{(\alpha)}\|_2^2}.
\end{equation}
Notice that spaces $H^s$, $H^r$, and $W^\alpha_2$ are a particular
cases of $W^A_2$. Now consider
\begin{equation}
P\colon x\mapsto\sum_{\alpha \in A} x^\alpha.
\end{equation}
If the coordinates of every $\alpha\in\vartheta(A)$ are even,  the
differential operator $P(D)$ is non-degenerate and it follows from
Theorem~\ref{t:2} that $\|\cdot\|_{W^A_2}$ is equivalent 
to one of the norms in \eqref{asymp[norm-equivalence]}. 
If $\varrho=\varrho(\vartheta(A))$ and $\nu=\nu(\vartheta(A))$, we
again retrieve from Theorem~\ref{t:1} the result proven in 
\cite{DD84}, namely that for $n$ sufficiently large
\begin{equation} 
d_n\big(U^A_2,L_2(\TTd)\big)\asymp n^{-\varrho}
\big(\log n\big)^{\nu\varrho},
\end{equation}
where $U^A_2$ denotes the unit ball in $W^A_2$.

\subsection{Classes of functions induced by a differential operator} 

We give two examples of spaces $\WP$ with non-degenerate
differential operator $P(D)$ for $d=2$. Consider the polynomials
\begin{equation} 
\begin{aligned}
\begin{cases}
P_1\colon x\mapsto
8x_1^4-4x_1^3-3x_1^3x_2-2x_1^2x_2-4x_1x_2+6x_2^2-4x_1-3x_2+13\\
P_2\colon x\mapsto
6x_1^6+x_1^4x_2^2-6x_1^5-x_1^3x_2^2+5x_2^4-4x_2^3+3.
\end{cases}
\end{aligned}
\end{equation}
We have 
\begin{equation} 
\begin{cases}
A_1 &= \{(4,0),(3,0),(2,1),(2,0),(1,1),(0,2),(1,0),(0,1),(0,0)\}\\
\vartheta(A_1)&=\{(4,0),(0,2),(0,0)\}\\
A_2 &= \{(6,0),(4,2),(5,0),(3,2),(0,4),(0,3),(0,0)\}\\
\vartheta(A_2)&=\{(6,0),(4,2),(0,4),(0,0)\}.
\end{cases}
\end{equation}
It is easy to verify that $P_1(D)$ and $P_2(D)$ are non-degenerate
and that \eqref{e:hanoi2014-05-26} holds.
Moreover, $\varrho(\vartheta(A_1))=4/3$, $\nu(\vartheta(A_1))=0$, 
$\varrho(\vartheta(A_2))=8/3$, and $\nu(\vartheta(A_2))=1$.
We derive from Theorem~\ref{t:1} that
\begin{equation} 
d_n\big(U^{[P_1]},L_2(\TT^2)\big) \asymp n^{-4/3}, 
\end{equation}
and
\begin{equation} 
d_n\big(U^{[P_2]},L_2(\TT^2)\big)\asymp n^{-8/3}
\big(\log n\big)^{8/3}.
\end{equation}
Let us give an example of a degenerate differential operator. For 
\begin{equation} 
P_3\colon x\mapsto x_1^4-2x_1^3x_2+x_1^2x_2^2+x_1^2+x_2^2+1, 
\end{equation}
the differential operator $P_3(D)$ is degenerate, although 
$P_3\geq 1$ on $\RR^2$, and $U^{[P_3]}$ is a compact set in 
$L_2(\TT^2)$. Therefore, we cannot compute 
$d_n(U^{[P_3]},L_2(\TT^2))$ by using
Theorem~\ref{t:1}. However, by a direct computation we get 
$\card K(t)\asymp t^{1/2} \log t$. Hence, \eqref{e:7g7} yields
\begin{equation} 
d_n\big(U^{[P_3]},L_2(\TT^2)\big)\asymp n^{-2}\big(\log n\big)^2.
\end{equation}

\subsection{A conjecture}

Suppose that $\UP$ is compact in $L_2(\TTd)$. In view of 
Lemma~\ref{l:7}, this is equivalent to the conditions:
\begin{enumerate}
\item
For every $t\in\RP$, $K(t)$ is finite.
\item 
$\tau>0$.
\end{enumerate}
As mentioned in \eqref{e:7g7}, for every $n\in\NN$ sufficiently
large, if $t(n)\in \RPP$ is the maximal number such that $\card
K(t(n))\leq n$, then 
\begin{equation} 
d_n\big(\UP,L_2(\TTd)\big) \asymp \frac{1}{t(n)}.
\end{equation}
This means that the problem of computing the asymptotic order of
$d_n(\UP,L_2(\TTd))$ is equivalent to the problem of computing 
that of $\card K(t)$ when $t\to\pinf$. Let us 
formulate it as the following conjecture.

\begin{conjecture}
Suppose that, for every $t\in\RP$, $K(t)$ is finite
(the condition $\tau>0$ is not essential). Then there exist 
integers $\alpha$, $\beta$, and $\nu$ such that
$0<\alpha\leq\beta$, $0\leq\nu<d$, and, for $t$ large enough,
\begin{equation}
\card K(t) \asymp t^{\alpha/\beta}\big(\log t\big)^\nu.
\end{equation}
\end{conjecture}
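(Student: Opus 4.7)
The plan is to reduce the conjecture to an asymptotic volume estimate and then exploit the Newton polyhedron structure of $P$ at infinity. The starting point is the equivalence
\[
\card K(t)\asymp\text{vol}\,\Lambda(t),\quad\text{where}\quad
\Lambda(t)=\menge{x\in\RRd}{|P(x)|\leq t},
\]
for $t$ sufficiently large. Since $K(t)$ is assumed finite for every $t\in\RP$, $\Lambda(t)$ has finite volume for large $t$, and a standard lattice-point-versus-volume comparison (modeled on the argument invoked in Lemma~\ref{lemma[asymp]} via \cite[Theorem~2]{DD84}) should yield this equivalence, using that $\Lambda(t)$ is semi-algebraic with polynomial-degree boundary. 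This reduction is the most mechanical part of the plan.

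The asymptotic $\text{vol}\,\Lambda(t)\asymp t^{\alpha/\beta}(\log t)^\nu$ would then be derived by a sectorial decomposition of $\RRd$ associated with the faces of $\conv{A}$. On each sector where a single monomial $|x^\alpha|$ dominates $|P(x)|$, the change of variables $y_j=\log|x_j|$ turns the constraint $|x^\alpha|\leq t$ into the linear inequality $\scal{\alpha}{y}\leq\log t$, and the volume contribution reduces to a linear-programming problem structurally identical to \eqref{e:hanoi-dual}. Its optimal value yields a rational exponent $\mu_\sigma=\alpha_\sigma/\beta_\sigma$ (rationality is forced by the integer coordinates of the vertices of $\conv{A}$), and the dimension of the set of optimizers yields an integer log power $\nu_\sigma$, bounded by the dimension of the corresponding face and therefore strictly less than $d$. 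The dominant sector furnishes the advertised form, and the upper bound $\alpha\leq\beta$ reflects the fact that, under the finiteness hypothesis on $K(t)$, no face of $\conv{A}$ can lie below the diagonal ray generated by $\boldsymbol{1}$.

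The principal obstacle is the degenerate case, in which the candidate dominant monomial $x^\alpha$ on some sector vanishes on a positive-codimensional subvariety, so that $|P(x)|$ is not comparable to $|x^\alpha|$ throughout the sector. A tubular neighborhood of the vanishing locus must then be analyzed using the lower-order terms of $P$, and the example $P_3$ in Section~\ref{Norms equivalences} already illustrates how this mechanism produces an extra $\log t$ factor. A systematic treatment would proceed via a toric resolution: refine the dual fan of $\conv{A}$ until the pullback of $P$ becomes locally monomial on each chart of the associated toric variety, so that every chart contributes a $t^{\alpha_i/\beta_i}(\log t)^{\nu_i}$ term with integer parameters, and the envelope of these contributions gives the conjectured form. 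Making this procedure rigorous in full generality — in particular, verifying that finitely many blow-ups suffice and that the resulting exponents remain rational with the claimed bounds — lies substantially beyond the convex-analytic framework of the present paper, which is precisely why the statement is posed as a conjecture rather than a theorem.
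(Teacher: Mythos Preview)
The statement you address is posed in the paper as a \emph{conjecture}, not a theorem; the paper supplies no proof. Immediately after stating it, the authors merely remark that the conjecture is already known in the non-degenerate case treated earlier (via \eqref{e:hanoi}) and leave the general degenerate case open. There is therefore no paper-proof for your proposal to be compared against.

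Your text is itself not a proof, and you say so in your final paragraph: what you present is a programme of attack (lattice-point/volume comparison, sectorial decomposition along the dual fan of $\conv{A}$, toric resolution to tame degenerate faces). As a research outline this is reasonable and goes well beyond anything the paper contemplates, but it should not be billed as a proof.

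One concrete caution on a step you do commit to: your geometric justification of the constraint $\alpha\leq\beta$ (``no face of $\conv{A}$ can lie below the diagonal ray generated by $\boldsymbol{1}$'') does not hold up. Already in the paper's own isotropic Sobolev example with even $s$ and $s<d$, the operator $P(D)$ is non-degenerate, $K(t)$ is finite for every $t$, and $\card K(t)\asymp t^{d/s}$ with $d/s>1$. So either the ordering $\alpha\leq\beta$ in the conjecture is a slip on the authors' part, or a genuinely different mechanism would be required at that step; in either case the heuristic you offer there is not a valid argument.
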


In view of \eqref{e:hanoi}, we know that the conjecture is true 
when $P$ satisfies conditions \eqref{condition[|P(x)|>]} and 
\eqref{e:uj}.

\noindent
{\bfseries Acknowledgment.}
Dinh Dung's research work is funded by Vietnam National Foundation 
for Science and Technology Development (NAFOSTED) under Grant No. 
102.01-2014.02, and a part of it was done when Dinh Dung was working 
as a research professor and Patrick Combettes was visiting at the 
Vietnam Institute for Advanced Study in Mathematics (VIASM). 
Both authors thank  the VIASM for providing fruitful research 
environment and working condition. They also thank the LIA 
CNRS Formath Vietnam for providing travel support.

\end{document}